\renewcommand{\tauCohomology}{T}
\newcommand{\FCohomology}{S}
\renewcommand{\O}{\mathcal O}
\renewcommand{\to}[1][]{\xrightarrow{\ #1\ }}
\begin{document}
\numberwithin{equation}{theorem}
\title[A canonical linear system]{A canonical linear system associated to adjoint divisors in characteristic $p > 0$}
\author{Karl Schwede}
\begin{abstract}Suppose that $X$ is a projective variety over an algebraically closed field of characteristic $p > 0$.  Further suppose that $L$ is an ample (or more generally in some sense positive) divisor.  We study a natural linear system in $|K_X + L|$.  We further generalize this to incorporate a boundary divisor $\Delta$.
We show that these subsystems behave like the global sections associated to multiplier ideals, $H^0(X, \mJ(X, \Delta) \tensor L)$ in characteristic zero.  In particular, we show that these systems are in many cases base-point-free.  While the original proof utilized Kawamata-Viehweg vanishing and variants of multiplier ideals, our proof uses test ideals.
\end{abstract}
\subjclass[2000]{14F18, 13A35, 14B05}
\keywords{Linear system, test ideal, multiplier ideal, base-point-free, ample, characteristic $p > 0$}
\thanks{The author was partially supported by an NSF postdoctoral fellowship \#0703505 and the NSF grant DMS \#1064485}

\address{Department of Mathematics\\ The Pennsylvania State University\\ University Park, PA, 16802, USA}
\email{schwede@math.psu.edu}
\maketitle

\section{Introduction}

Suppose that $X$ is a normal variety and $\Delta$ is an effective $\bQ$-divisor such that $K_X + \Delta$ is $\bQ$-Cartier.  In characteristic zero, to this data one associates an ideal $\mJ(X, \Delta)$ called the \emph{multiplier ideal}, which reflects subtle properties of both the singularities and $X$ and $\Delta$.  The utility of the multiplier ideal for projective varieties lies mostly in its connections with a form of Kawamata-Viehweg vanishing usually called Nadel vanishing.  For example, consider the following application of the aforementioned vanishing and Castelnuovo-Mumford regularity.  If $M$ is a globally generated ample Cartier divisor on an $n$-dimensional normal variety and $L$ is another Cartier divisor such that $L - K_X - \Delta$ is big and nef, then it follows that $\mJ(X, \Delta) \tensor \O_X(nM+ L)$ is globally generated, \cf \cite[Proposition 9.4.26]{LazarsfeldPositivity2}.

In characteristic $p > 0$, to the same data $(X, \Delta)$, one can associate an ideal $\tau(X, \Delta)$ called the \emph{test ideal}.  The test ideal agrees with the multiplier ideal after reduction to characteristic $p \gg 0$ and it has been shown that $\tau(X, \Delta)$ satisfies many of the same \emph{local} properties as the multiplier ideal.  However, global applications of the test ideal have so far been lacking.  This is largely because the Nadel-type vanishing theorems are known to be false.  In this paper we prove the following result.

\begin{theorem*} [Corollary \ref{cor.FullGlobalGeneration}]
Suppose that $X$ is an $n$-dimensional normal projective variety over an $F$-finite field and that $M$ is a globally generated ample Cartier divisor.  Let $\Delta$ be an effective $\bQ$-divisor such that $K_X + \Delta$ is $\bQ$-Cartier and suppose that $L$ is a Cartier divisor such that $L - K_X - \Delta$ is big and nef.

Then
\[
\tau(X, \Delta) \tensor \O_X(L + nM)
\]
is globally generated.  In particular, if $(X, \Delta)$ is strongly $F$-regular, then $\O_X(L + nM)$ is globally generated.
\end{theorem*}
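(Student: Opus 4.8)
The plan is to deduce the statement from a characteristic-$p$ analogue of Nadel vanishing for the test ideal, combined with Castelnuovo--Mumford regularity measured against the globally generated ample divisor $M$. Recall that a coherent sheaf $\mathcal{G}$ is globally generated once it is $0$-regular with respect to $M$, i.e. $H^i(X,\mathcal{G}\otimes\O_X(-iM))=0$ for all $i>0$. Applying this to $\mathcal{G}=\tau(X,\Delta)\otimes\O_X(L+nM)$, the theorem reduces to proving
\[
H^i\big(X,\ \tau(X,\Delta)\otimes\O_X(L+(n-i)M)\big)=0 \qquad\text{for all } i>0.
\]
For $i>n$ this is automatic. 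The guiding observation is a positivity bookkeeping: setting $A_i:=\big(L+(n-i)M\big)-(K_X+\Delta)=(L-K_X-\Delta)+(n-i)M$, for $1\le i\le n-1$ the divisor $A_i$ is ample (a nef divisor plus the ample $(n-i)M$), whereas for $i=n$ it is only big and nef. I would therefore isolate and prove a vanishing lemma: if $D$ is Cartier with $D-(K_X+\Delta)$ ample (and, in top degree, big and nef suffices), then $H^i(X,\tau(X,\Delta)\otimes\O_X(D))=0$ for $i>0$.

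For the lemma I would use the description of the test ideal through Frobenius trace maps. Fix $e$ divisible enough that $(p^e-1)(K_X+\Delta)$ is Cartier and consider the trace
\[
\Phi_e\colon F^e_*\O_X\big(p^eD+(1-p^e)(K_X+\Delta)\big)\longrightarrow \O_X(D),
\]
whose stable image is $\tau(X,\Delta)\otimes\O_X(D)$. Since $F^e$ is finite, hence affine, cohomology is unchanged by $F^e_*$, so the source computes $H^i\big(X,\O_X(D+(p^e-1)A)\big)$ with $A:=D-(K_X+\Delta)$. When $A$ is ample, Serre vanishing annihilates this for $e\gg0$ and all $i>0$, and feeding it through the trace description of $\tau(X,\Delta)\otimes\O_X(D)$ yields the vanishing in the range $1\le i\le n-1$. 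The honest technical point here is converting the \emph{sheaf-level} surjectivity of $\Phi_e$ onto $\tau(X,\Delta)\otimes\O_X(D)$ into a genuine surjection on $H^i$; I would extract this from the compatibility of the trace maps and Grothendieck--Serre duality, which identifies the relevant $H^i$ with the stable image of the dual Frobenius system rather than merely with the image inside $H^i(\O_X(D))$.

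The main obstacle is the top-degree case $i=n$, where $A=L-K_X-\Delta$ is only big and nef and Serre vanishing is no longer available. Here I would invoke Serre duality on the $n$-dimensional normal projective variety $X$, writing
\[
H^n\big(X,\O_X(D+(p^e-1)A)\big)^{\vee}\cong H^0\big(X,\ \omega_X\otimes\O_X(-D)\otimes\O_X(-(p^e-1)A)\big).
\]
The sheaf $\mathcal{F}:=\omega_X\otimes\O_X(-D)$ is a fixed coherent (indeed reflexive) sheaf, and because $A$ is big and nef the groups $H^0\big(X,\mathcal{F}\otimes\O_X(-mA)\big)$ vanish for $m\gg0$: a nonzero section would force a line bundle of unbounded degree on a general complete-intersection curve to inject into the fixed sheaf $\mathcal{F}$. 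Taking $m=p^e-1$ kills the source and hence $H^n(X,\tau(X,\Delta)\otimes\O_X(L))$, completing all the regularity hypotheses. Finally, when $(X,\Delta)$ is strongly $F$-regular one has $\tau(X,\Delta)=\O_X$, so the conclusion specializes at once to global generation of $\O_X(L+nM)$. I expect the two delicate steps to be the passage from sheaf surjectivity of the trace to cohomological surjectivity and the top-degree big-and-nef estimate; once the Frobenius-trace description of the test ideal is in hand, the remainder is formal.
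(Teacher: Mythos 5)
The central lemma you propose --- that $H^i\big(X,\tau(X,\Delta)\otimes\O_X(D)\big)=0$ for $i>0$ whenever $D-K_X-\Delta$ is ample --- is \emph{false} in characteristic $p>0$, and this failure is precisely the obstruction this paper is designed to circumvent (the introduction notes that ``Nadel-type vanishing theorems are known to be false''). Raynaud's counterexamples to Kodaira vanishing give a smooth projective surface $X$ and an ample Cartier divisor $A$ with $H^1(X,\omega_X\otimes\O_X(A))\neq 0$; taking $\Delta=0$, so that $\tau(X,\Delta)=\O_X$ because a smooth variety is strongly $F$-regular, and $D=K_X+A$ violates your lemma. Consequently the ``honest technical point'' you flag --- upgrading sheaf-level surjectivity of the trace map to surjectivity on $H^i$ --- cannot be repaired: Serre vanishing does kill $H^i$ of the source $F^e_*\O_X\big(p^eD+(1-p^e)(K_X+\Delta)\big)$ for $e\gg 0$, but a surjection of sheaves need not induce a surjection on higher cohomology, and the stable images on $H^i$ (which do vanish, as in \cite{SmithFujitasFreeness, HaraACharacteristicPAnalogOfMultiplierIdealsAndApplications}) are in general strictly smaller than $H^i\big(X,\tau(X,\Delta)\otimes\O_X(D)\big)$. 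The paper's argument avoids this entirely by running Castelnuovo--Mumford regularity on the Frobenius pushforward itself rather than on the target: for $e\gg0$ the sheaf $F^{e}_*\big(\tau(X,\Delta)\cdot\O_X((1-p^{e})(K_X+\Delta)+p^{e}(L+nM))\big)$ is $0$-regular with respect to $M$, since twisting it by $\O_X(-iM)$ and using that $F^e$ is affine moves the computation upstairs where the twists grow like $p^{e}$ and Serre vanishing applies; hence this pushforward is globally generated as an $\O_X$-module, and --- the key point --- global generation, unlike cohomology vanishing, passes to quotients, so the trace surjection onto $\tau(X,\Delta)\otimes\O_X(L+nM)$ finishes the proof.

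There is a second, smaller gap: you fix $e$ with $(p^e-1)(K_X+\Delta)$ Cartier, which requires the index of $K_X+\Delta$ to be prime to $p$, whereas the statement assumes only that $K_X+\Delta$ is $\bQ$-Cartier. The paper removes this hypothesis, and reduces big-and-nef to ample, by a single perturbation trick: choose an effective Cartier divisor $E$ with $L-K_X-\Delta-\varepsilon E$ ample and $\tau(X,\Delta+\varepsilon E)=\tau(X,\Delta)$ for small rational $\varepsilon>0$, then add $\Gamma=\tfrac{1}{p^e-1}(D+K_X+\Delta+\varepsilon E)$ for suitable effective $D$ and $e\gg0$, so that $K_X+\Delta+\varepsilon E+\Gamma$ has index prime to $p$, the test ideal is unchanged, and the ample-case theorem (Theorem \ref{thm.BaseGlobalGenerationForSigmaTau}) applies. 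In particular your top-degree Serre-duality argument for big and nef divisors is not needed; and in any case it would only address $i=n$, while the vanishing you want already fails in intermediate degrees.
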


The proof strategy is the same as D.~Keeler's simplified proof of some special cases of Fujita's conjecture in characteristic $p > 0$ \cite{KeelerFujita}.  In this special case of Fujita's conjecture, N.~Hara also independently observed the same proof at roughly the same time as Keeler, and while he gave public talks on the method, he did not write it down.  We will discuss this proof and generalizations in Theorem \ref{thm.KeelerThm}.

One application of the aforementioned global generation theorem for multiplier ideals is the construction of effective bounds on the degrees of hypersurfaces passing through points on $\mathbb{P}^n$.  More generally, in characteristic zero, these bounds are a special case of a result originally proved using vanishing theorems and multiplier-ideal-like constructions in \cite{EsnaultViehwegSurUneMinoration}, \cf \cite{WaldschmidtNombresTranscendants}.  Another proof of these bounds explicitly using the language of multiplier ideals can be found in \cite[Section 10.1]{LazarsfeldPositivity2}, \cf \cite{EsnaultViehwegLecturesOnVanishing}.  This latter proof goes through without change and so we obtain a proof of the following characteristic $p > 0$ theorem that exactly mimics global proofs using multiplier ideals.

\begin{theorem*}[Theorem \ref{thm.HypersurfaceDegreeApplication}] \cite[Slide 17]{HarbourneAsymptoticInvariantsSlides}, \cf \cite[Section 10.1]{LazarsfeldPositivity2}.

Fix a reduced closed subscheme $S \subseteq \bP^n_k = X$, where $k$ is an algebraically closed field of characteristic $p > 0$, such that each irreducible component $Z \subseteq  S$ has codimension $\leq e$ in $\bP^n_k$.
Suppose that $A \subseteq \bP^n_k$ is a hypersurface of degree $d$ such that $\mult_x A \geq l$ for all $x \in S$.  Then $S$ lies on a hypersurface of degree $\lfloor {de \over l} \rfloor$.
\end{theorem*}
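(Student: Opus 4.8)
The plan is to follow the characteristic-zero argument of \cite[Section 10.1]{LazarsfeldPositivity2} essentially verbatim, replacing the multiplier ideal by the test ideal and Nadel vanishing by the global generation statement of Corollary \ref{cor.FullGlobalGeneration}. Set $c = e/l \in \bQ_{>0}$ and consider the pair $(X, \Delta)$ with $\Delta = cA$ on $X = \bP^n_k$. The theorem then reduces to two assertions: a local containment $\tau(X, cA) \subseteq \I_S$, and the existence of a nonzero global section of $\tau(X, cA) \tensor \O_X(N)$ for $N = \rounddown{de/l}$. Granting these, any such section is a degree-$N$ form lying in $\I_S$, hence cuts out a hypersurface of degree $N = \rounddown{de/l}$ through $S$, which is exactly what we want.

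For the local containment I would work at the generic point $\eta_Z$ of each irreducible component $Z \subseteq S$, of codimension $c_Z \leq e$. Since $\mult_x A \geq l$ for every $x \in S$, the local equation $f$ of $A$ satisfies $f \in \mathfrak{m}_{\eta_Z}^{\,l}$ in the regular local ring $\O_{X,\eta_Z}$ of dimension $c_Z$; by order-reversing monotonicity of the test ideal in its coefficient ideal, together with the identification of the divisorial test ideal of a principal divisor with the ideal-theoretic one, this gives $\tau(X, cA)_{\eta_Z} \subseteq \tau\bigl(\mathfrak{m}_{\eta_Z}^{\,cl}\bigr)$. In a regular local ring of dimension $c_Z$ the maximal ideal is of monomial type, so $\tau(\mathfrak{m}^{t}) = \mathfrak{m}^{\lfloor t \rfloor - c_Z + 1} \subseteq \mathfrak{m}$ as soon as $t \geq c_Z$; here $cl = e \geq c_Z$, whence $\tau(X, cA) \subseteq \I_Z$ near $\eta_Z$ and therefore on all of $X$. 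Intersecting over the finitely many components and using that $S$ is reduced yields $\tau(X, cA) \subseteq \bigcap_Z \I_Z = \I_S$. I expect this multiplicity estimate for test ideals to be the main obstacle: the work is to package the known local properties of $\tau$ (its behaviour on powers of the maximal ideal and its monotonicity) into precisely the form used for multiplier ideals in \cite{LazarsfeldPositivity2}.

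It remains to produce the section, which is where Corollary \ref{cor.FullGlobalGeneration} enters, applied with the globally generated ample divisor $M = \O_X(1)$. Choosing a Cartier divisor $L$ with $\deg L = N - n$, and using $K_X = \O_X(-n-1)$ together with $\deg(cA) = de/l$, one computes on $\bP^n_k$
\[
\deg\bigl(L - K_X - cA\bigr) = (N-n) + (n+1) - \tfrac{de}{l} = N + 1 - \tfrac{de}{l}.
\]
For $N = \rounddown{de/l}$ this is positive (it equals $1$ when $l \mid de$ and lies in $(0,1)$ otherwise), so $L - K_X - \Delta$ is ample, in particular big and nef. Corollary \ref{cor.FullGlobalGeneration} then shows that $\tau(X, \Delta) \tensor \O_X(L + nM) = \tau(X, cA) \tensor \O_X(N)$ is globally generated; since $\tau(X, cA)$ agrees with $\O_X$ on a dense open set, this nonzero coherent sheaf admits a nonzero global section, completing the argument. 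Finally, the same computation shows $c = e/l$ is the smallest admissible coefficient and that replacing $N$ by $N-1$ makes the degree above nonpositive, so the big-and-nef hypothesis fails; this forces the optimal bound to be exactly $\rounddown{de/l}$.
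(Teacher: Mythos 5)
Your proposal is correct, and its overall architecture is the same as the paper's: the reduction to the two assertions $\tau(X, \frac{e}{l}A) \subseteq \I_S$ and global generation of $\tau(X, \frac{e}{l}A) \tensor \O_X(\rounddown{de/l})$, and the entire global half (the choice $L = (\rounddown{de/l} - n)H$, the ampleness computation $\deg(L - K_X - \frac{e}{l}A) = \rounddown{de/l} + 1 - \frac{de}{l} > 0$, the appeal to Corollary \ref{cor.FullGlobalGeneration} with $M = H$, and the extraction of a nonzero section from a nonzero globally generated sheaf) coincide with the paper's proof, which is itself modeled on \cite[Section 10.1]{LazarsfeldPositivity2}. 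The one genuine divergence is how the local containment is established. The paper isolates it as Lemma \ref{lem.MultiplicityImpliesTestContainment} and proves it birationally: blow up the (possibly non-closed) point, note that the exceptional discrepancy in $K_Y - \pi^*(K_X + D)$ is $\leq -1$, and use the comparison $\tau(X,D) \subseteq \pi_* \O_Y(\lceil K_Y - \pi^*(K_X+D) \rceil)$. You instead argue purely algebraically at the generic point $\eta_Z$: from $f \in \mathfrak{m}_{\eta_Z}^{\,l}$, monotonicity of test ideals (this is order-\emph{preserving}, not ``order-reversing'' as you wrote, though your containment does go the right way), and the Howald/Hara--Yoshida-type formula $\tau(\mathfrak{m}^{t}) = \mathfrak{m}^{\lfloor t \rfloor - c_Z + 1}$ in a regular local ring of dimension $c_Z$, valid for $t \geq c_Z$. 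Both routes rest on standard external facts (yours on test ideals of ideals with real exponents in regular rings, the paper's on the test-ideal/multiplier-ideal comparison), and both correctly reduce the sheaf containment $\tau \subseteq \I_Z$ to the localization at $\eta_Z$; your variant has the mild virtue of staying entirely within characteristic-$p$ local algebra, with no resolution-flavored input, while the paper's lemma more directly parallels \cite[Proposition 9.3.2]{LazarsfeldPositivity2}. One caveat: your closing claim that the failure of the big-and-nef hypothesis for $N - 1$ ``forces the optimal bound to be exactly $\rounddown{de/l}$'' is a non sequitur --- failure of a sufficient condition says nothing about optimality --- but this remark is extraneous, since the theorem asserts no optimality.
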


This result is not new to experts in characteristic $p > 0$. Indeed, this bound has been recently observed by B.~Harbourne as a corollary of \cite{HochsterHunekeComparisonOfSymbolic}, see \cite[Slide 17]{HarbourneAsymptoticInvariantsSlides} and \cf \cite{HarbourneHunekeSymbolicPowersHighly,EinLazSmithSymbolic}.  However, it still demonstrates that global statements typically proven using multiplier ideals can be obtained through the use of test ideals.

We now leave our discussion of applications.
We actually prove a substantially stronger theorem than the mentioned global generation statement, Corollary \ref{cor.FullGlobalGeneration} above.  We identify a canonical vector sub-space of $H^0\left(X, \tau(X, \Delta) \tensor \O_X(L + nM)\right)$ which globally generates $\tau(X, \Delta) \tensor \O_X(L + nM)$.  It is the study of the associated linear system that inspires the title of this paper.  Let us briefly describe how this linear system arises in a simple case.  Suppose that $X$ is a variety over a perfect field $k$ and that $\sL = \O_X(L)$ is any line bundle (we are mostly interested in the case when $L$ is positive, {\itshape e.g.} ample).  Set $F^e : X \to X$ to be the absolute $e$-iterated Frobenius.  There is a natural $\O_X$-module map $F^e_* \omega_X \to \omega_X$ which arises from the Cartier operator if $X$ is smooth, but exists generally as the Grothendieck-trace map.  By twisting this map by $\sL$ and taking cohomology we obtain maps
\[
 H^0\left(X, F^e_* (\omega_X \tensor \sL^{p^e})\right) \to H^0\left(X, \omega_X \tensor \sL\right).
\]
For $e \gg 0$, the image of this map stabilizes (note $H^0\left(X, \omega_X \tensor \sL\right)$ is a finite dimensional vector space) and so induces a linear system.  We denote this stable image by $\FCohomology^0(X, \omega_X \tensor \sL)$.  We should note that this image is sometimes a proper submodule of $ H^0\left(X, \omega_X \tensor \sL\right)$, even if the map $H^0(X, F^e_* \omega_X) \to H^0(X, \omega_X)$ is bijective, see for example \cite{TangoOnTheBehaviorOfVBAndFrob}.

Suppose now that $X$ is $F$-rational and $\sL$ is a globally generated ample line bundle.  In \cite{SmithFujitasFreeness, HaraACharacteristicPAnalogOfMultiplierIdealsAndApplications, KeelerFujita} it is shown that $\omega_X \tensor \sL^{\dim X + 1}$ is globally generated, \cf \cite{ArapuraFAmplitudeAndStrongVanishing}.  However, essentially using D.~Keeler's proof (also independently observed by N.~Hara, as mentioned above), we actually conclude that $\omega_X \tensor \sL^{\dim X + 1}$ is globally generated by $\FCohomology^0(X, \omega_X \tensor \sL^{\dim X})$.  In fact, the same result holds under the weaker hypothesis that $X$ has $F$-injective singularities (a characteristic $p > 0$ analog of Du Bois singularities).

This naturally leads one to ask whether Fujita's conjecture \cite{FujitaOnPolarizedManifolds} (or the bounds in \cite{AngerhnSiuEffectiveFreeness}) might hold for $\FCohomology^0(X, \omega_X \tensor \sL)$.
\begin{question}
 Suppose that $X$ is a $d$-dimensional smooth variety and $\sL$ is any ample line bundle.  Is it true that for all $n \geq d$ that the linear system associated to:
\begin{itemize}
 \item[(i)]  $\FCohomology^0(X, \omega_X \tensor \sL^{n + 1})$ is base-point-free?
 \item[(ii)]  $\FCohomology^0(X, \omega_X \tensor \sL^{n + 2})$ induces an embedding?
\end{itemize}
\end{question}
\noindent Even without the assumption that $\sL$ is globally generated, we answer this question in the affirmative for curves in Theorem \ref{thm.TauCohomologyForCurves}.

This special vector subspace $\FCohomology^0(X, \omega_X \tensor \sL)$ also behaves better with respect to restriction than $H^0\left(X, \omega_X \tensor \sL\right)$.  Suppose that $Z$ is any $F$-pure center of a pair $(X, \Delta)$.  There is a surjection between a canonical vector subspace on $X$ and another one on $Z$, for any line bundle $L$ such that $L - K_X - \Delta$ is ample.  This, combined with the previous results implies:

\begin{theorem*}
[Corollary \ref{cor.NoBasePointsAlongZ}]
Suppose that $X$ is a normal projective variety and $\Delta$ is a $\bQ$-divisor such that $K_X + \Delta$ is $\bQ$-Cartier with index not divisible by $p > 0$ and that $(X, \Delta)$ is sharply $F$-pure.  Further suppose that $M$ is a Cartier divisor on $X$ such that $M - K_X - \Delta$ is ample.  Suppose that $Z \subseteq X$ is a normal $F$-pure center of $(X, \Delta)$.  Write $(K_X + \Delta)|_Z \sim_{\bQ} K_Z + \Delta_Z$ with $\Delta_Z$ as in Definition \ref{def.FPureCenter}.  Finally suppose that one of the following holds:
\begin{itemize}
\item[(i)]  $\O_Z(M)$ is equal to $\O_Z((\dim Z) A + B)$ for some globally generated ample Cartier divisor $A$ on $Z$ and some Cartier divisor $B$ such that $B - K_Z - \Delta_Z$ is ample, or
\item[(ii)]  $Z$ is a minimal $F$-pure center and $\O_Z(M)$ is equal to $\O_Z((\dim Z) A + B)$ for some globally generated ample Cartier divisor $A$ on $Z$ and some Cartier divisor $B$ such that $B - K_Z - \Delta_Z$ is big and nef, or
\item[(iii)]  $Z$ is a smooth curve and $M|_Z - K_Z - \Delta_Z \sim_{\bQ} (M - K_X - \Delta)|_Z$ has degree $>1$, or
\item[(iv)]  $Z$ is a closed point.
\end{itemize}
Then $H^0(X, \O_X(M))$ has no base points along $Z$.
\end{theorem*}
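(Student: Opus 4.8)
The plan is to reduce the statement to base-point-freeness of a canonical linear system on $Z$ and then to lift sections back up to $X$ using the restriction surjection advertised in the introduction. Write $\FCohomology^0(X, \O_X(M))$ for the canonical vector subspace of $H^0(X, \O_X(M))$ cut out by the stable image of the twisted Frobenius trace maps for the pair $(X, \Delta)$; here the subspace sees all of $\O_X(M)$ (rather than a proper test-ideal twist) because $(X, \Delta)$ is sharply $F$-pure, so $\sigma(X, \Delta) = \O_X$. Since $M - K_X - \Delta$ is ample, the restriction result for $F$-pure centers supplies a surjection
\[
\FCohomology^0(X, \O_X(M)) \onto \FCohomology^0(Z, \O_Z(M|_Z)),
\]
where the relevant pair on $Z$ is $(Z, \Delta_Z)$ with $(K_X + \Delta)|_Z \sim_\bQ K_Z + \Delta_Z$, and $(Z, \Delta_Z)$ is again sharply $F$-pure by adjunction for $F$-pure centers. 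Granting this, the corollary follows once $\FCohomology^0(Z, \O_Z(M|_Z))$ is shown to have no base points on $Z$: for $z \in Z$, a section $s_Z$ of the latter with $s_Z(z) \neq 0$ lifts through the surjection to $s_X \in \FCohomology^0(X, \O_X(M)) \subseteq H^0(X, \O_X(M))$ with $s_X|_Z = s_Z$, so $s_X(z) \neq 0$ and $z$ is not a base point.

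It therefore remains to verify base-point-freeness of $\FCohomology^0(Z, \O_Z(M|_Z))$ in each of the four cases, and this is precisely what the hypotheses on $M|_Z$ are arranged to guarantee by invoking the earlier global-generation and curve results on $Z$ itself. In case (i), $\O_Z(M) \cong \O_Z((\dim Z)A + B)$ with $A$ globally generated ample and $B - K_Z - \Delta_Z$ ample, so the Keeler-type statement (the canonical-subspace version of Corollary \ref{cor.FullGlobalGeneration}, with $n = \dim Z$) shows $\FCohomology^0(Z, \O_Z(M|_Z))$ globally generates $\O_Z(M|_Z)$. Case (ii) runs the same argument, but for a minimal $F$-pure center the cleaner adjunction lets us relax ampleness of $B - K_Z - \Delta_Z$ to big and nef. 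Case (iii) is exactly Theorem \ref{thm.TauCohomologyForCurves}: on the smooth curve $Z$ with $\deg(M|_Z - K_Z - \Delta_Z) > 1$, the canonical subspace is base-point-free. Case (iv) is immediate, since for a closed point $\FCohomology^0(Z, \O_Z(M|_Z))$ is the stable image of the trace maps on a residue field, which is nonzero precisely because $Z$ is an $F$-pure center; the surjection then already produces a section of $\O_X(M)$ not vanishing at $Z$.

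The crux, on which everything else rests, is the restriction surjection together with the adjunction package $(K_X + \Delta)|_Z \sim_\bQ K_Z + \Delta_Z$ with $(Z, \Delta_Z)$ sharply $F$-pure. Establishing it means tracking the Frobenius trace along the $F$-pure center: one must check that the stable image defining $\FCohomology^0$ on $X$ restricts compatibly to the stable image on $Z$, and that ampleness of $M - K_X - \Delta$ forces this restriction to stay surjective after stabilization. This is where the positivity is consumed, playing the role that Nadel/Kawamata--Viehweg vanishing plays in characteristic zero, and I expect it to be the main obstacle. A secondary subtlety is the split between (i) and (ii): for a general $F$-pure center the boundary $\Delta_Z$ produced by adjunction need not be as well-behaved as for a minimal center, which is why the stronger ample hypothesis is imposed in (i) to secure the global-generation input on $Z$, whereas minimality in (ii) yields a normal center on which big and nef already suffices.
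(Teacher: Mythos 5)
Your overall architecture is exactly the paper's: reduce via the restriction surjection of Proposition \ref{prop.SurjectiveOfSigmas} to base-point-freeness of the canonical subspace on $Z$, note $\sigma(Z, \phi_Z^{\Delta}) = \O_Z$ by sharp $F$-purity, and then discharge the four cases. However, the way you discharge cases (i)--(iii) has genuine gaps, all stemming from not tracking \emph{which} canonical subspace, and which ideal ($\sigma$ versus $\tau$), each cited result actually controls. In case (i) you invoke ``the canonical-subspace version of Corollary \ref{cor.FullGlobalGeneration}''; but that corollary is a statement about $\tau(Z,\Delta_Z)$, and since $(Z,\Delta_Z)$ is here only sharply $F$-pure, $\tau(Z,\Delta_Z)$ may be a proper ideal, in which case global generation of $\tau(Z,\Delta_Z)\tensor\O_Z(M)$ says nothing at points of the cosupport of $\tau$ --- every section of that subsheaf vanishes there. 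What case (i) needs is the $\sigma$-version, Theorem \ref{thm.BaseGlobalGenerationForSigmaTau}, which is why (i) assumes $B - K_Z - \Delta_Z$ ample rather than big and nef. In case (ii) the actual mechanism is not a vaguely ``cleaner adjunction'': one needs that a \emph{minimal} $F$-pure center of a sharply $F$-pure pair has $(Z,\Delta_Z)$ \emph{strongly $F$-regular} by $F$-adjunction \cite{SchwedeFAdjunction}, so that $\tau(Z,\Delta_Z) = \O_Z$ and the $\tau$-based Corollary \ref{cor.FullGlobalGeneration} (whose perturbation trick works for $\tau$ but fails for $\sigma$, cf.\ the remark following that corollary and Remark \ref{rem.S0ForBigAndSemiample}) can be applied with $B - K_Z - \Delta_Z$ merely big and nef. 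Without naming strong $F$-regularity, your case (ii) does not go through.

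Case (iii) is the clearest gap: it is \emph{not} ``exactly Theorem \ref{thm.TauCohomologyForCurves}.'' That theorem concerns $\tauCohomology^0 \subseteq \FCohomology^0(Z, \omega_Z \tensor \sL)$ defined by the \emph{boundary-free} trace $F^e_* \omega_Z \to \omega_Z$, whereas the subspace you receive from Proposition \ref{prop.SurjectiveOfSigmas} is $\FCohomology^0\big(Z, \sigma(Z,\phi_Z^{\Delta})\tensor\O_Z(M)\big)$, defined by the $\Delta_Z$-twisted maps $F^e_* \O_Z\big((1-p^e)(K_Z+\Delta_Z) + p^e M\big) \to \O_Z(M)$. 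Since $\Delta_Z \geq 0$, the twisted sheaf includes into the untwisted one compatibly with the traces, so the boundary version is the \emph{smaller} subspace; base-point-freeness of the theorem's (larger) subspace therefore does not descend to the subspace you actually control. One must rerun the curve argument with the boundary twist: for $e \gg 0$ and a closed point $Q \in Z$, show $H^1\big(Z, \O_Z(p^e(M|_Z - Q) + (1-p^e)(K_Z + \Delta_Z))\big) = 0$ by Serre duality together with the hypothesis $\deg(M|_Z - K_Z - \Delta_Z) > 1$, and then chase the two-row restriction diagram as in the proof of Theorem \ref{thm.TauCohomologyForCurves}. This is precisely what the paper does (``a similar strategy,'' not a citation), and it is a short fix, but it is a step your write-up skips over.
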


In the final section of this paper, Section \ref{sec.Comments}, we briefly point out that many of the results of this paper admit generalizations to the setting of Cartier modules as developed in \cite{BlickleBoeckleCartierModulesFiniteness}.  Indeed, the modules described in \cite{BlickleBoeckleCartierModulesFiniteness} seem to be the natural ones from the perspective of global generation statements.

\vskip 12pt
\noindent\parbox{6in}{\noindent {\it Acknowledgements: }
\vskip 3pt
\hskip 11pt
The author would like to thank Manuel Blickle, H{\'e}l{\`e}ne Esnault, Christopher Hacon, Mircea Musta{\c{t}}{\u{a}}, Karen Smith, Kevin Tucker and Wenliang Zhang for inspiring conversations.  The author would like to thank the referee, Christopher Hacon, Zsolt Patakfalvi and Kevin Tucker for numerous helpful comments on previous drafts of this paper.  He is also very thankful to Brian Harbourne for pointing out some additional references in characteristic $p > 0$ related to Theorem \ref{thm.HypersurfaceDegreeApplication}.  He is also thankful to Shunsuke Takagi and Nobuo Hara for some discussion of the history of the simple proof of special cases of Fujita's conjecture in characteristic $p > 0$.   Large parts of this paper were written while visiting the Johannes Gutenberg-Universit\"at Mainz during the summer of 2011.  This visit was partially funded by the SFB/TRR45 \emph{Periods, moduli, and the arithmetic of algebraic varieties}.}

\section{Preliminaries}
\label{sec.Preliminaries}

Throughout this paper, all schemes will be assumed to be separated and of finite type over an $F$-finite field $k$ of characteristic $p > 0$.  Note every perfect field is automatically $F$-finite.  A \emph{variety} will mean a connected reduced equidimensional scheme over $k$ (note that we do not assume varieties are irreducible).  For any variety of dimension $d$, we use $\omega_X$ to denote $\myH^{-d} \omega_X^{\mydot}$ (here $\omega_X^{\mydot}$ is used to denote $\eta^! k$ where $\eta : X \to k$ is the structural map).  We point out also that little is lost by simply restricting to normal irreducible varieties.  If $X$ is normal, then a \emph{canonical divisor} on $X$ is any divisor $K_X$ such that $\omega_X \cong \O_X(K_X)$.

We also record the following special case of Castelnuovo-Mumford regularity which we will frequently use.

\begin{theorem}[Mumford] \cf \cite[Theorem 1.8.5]{LazarsfeldPositivity2}
\label{thm.Regularity}
 Suppose that $X$ is a projective variety and $\sM$ is a globally generated ample line bundle on $X$.  Additionally suppose that $\sF$ is a coherent sheaf on $X$ such that $H^i(X, \sF \tensor \sM^{-i}) = 0$ for all $i > 0$.  Then $\sF$ is globally generated.
\end{theorem}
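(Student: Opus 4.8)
The plan is to reduce the statement to the classical Castelnuovo--Mumford regularity theorem on projective space and then transport global generation back to $X$. Recall that a coherent sheaf $\sG$ on a projective variety is called \emph{$m$-regular} (with respect to a fixed globally generated ample line bundle $\sM$) if $H^i(\sG \tensor \sM^{m-i}) = 0$ for all $i > 0$; the hypothesis on $\sF$ is precisely $0$-regularity, and the desired conclusion is the implication ``$0$-regular $\Rightarrow$ globally generated.'' The key organizing idea is that $\sM$ being globally generated and ample provides a morphism to projective space along which regularity is preserved, so that the whole problem descends to the case $X = \bP^N$, $\sM = \O_{\bP^N}(1)$, where one has honest hyperplanes at one's disposal.

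First I would use the global sections of $\sM$ to build the morphism $\phi \: X \to \bP^N$ with $\phi^* \O_{\bP^N}(1) \cong \sM$. Since $\sM$ is ample, $\phi$ is finite onto its image. Because $\phi$ is finite, $\phi_*$ is exact and $R^{>0}\phi_* = 0$, so the Leray spectral sequence together with the projection formula gives $H^i(\bP^N, \phi_*\sF \tensor \O_{\bP^N}(-i)) \cong H^i(X, \sF \tensor \sM^{-i})$ for every $i$. The hypothesis therefore says exactly that $\phi_*\sF$ is $0$-regular on $\bP^N$ with respect to $\O_{\bP^N}(1)$.

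Next I would invoke the classical form of Mumford's theorem on $\bP^N$: a $0$-regular coherent sheaf is globally generated. This is proved by induction on $\dim \Supp(\phi_*\sF)$ using a general hyperplane $\bP^{N-1} \subset \bP^N$; the induction must be run as a single package asserting simultaneously that (a) regularity ascends ($m$-regular implies $(m+1)$-regular), (b) the multiplication maps $H^0(\phi_*\sF \tensor \O(m)) \tensor H^0(\O(1)) \to H^0(\phi_*\sF \tensor \O(m+1))$ are surjective, and (c) $\phi_*\sF \tensor \O(m)$ is globally generated. What makes the projective-space case clean is that the restriction $H^0(\O_{\bP^N}(1)) \to H^0(\O_{\bP^{N-1}}(1))$ on linear forms is surjective, which is exactly the input needed to lift the multiplication relations from the hyperplane. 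Finally I would descend: global generation of $\phi_*\sF$ yields a surjection $\O_{\bP^N}^{\oplus r} \twoheadrightarrow \phi_*\sF$ whose sections come from $H^0(\bP^N, \phi_*\sF) = H^0(X, \sF)$; applying $\phi^*$ and composing with the counit $\phi^*\phi_*\sF \to \sF$---which is surjective for any finite (indeed affine) $\phi$---produces a surjection $\O_X^{\oplus r} \twoheadrightarrow \sF$ realized by genuine global sections of $\sF$. Hence $\sF$ is globally generated.

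The step I expect to be the main obstacle is the global-generation conclusion (c) inside the projective-space induction, precisely because it cannot be isolated from (a) and (b). After restricting to a general hyperplane one controls $\sF$ only along $H$, and the residual support off $H$ is $0$-dimensional by ampleness; the clean way to remove it is not a direct Nakayama argument but rather to descend global generation from a high twist $\sF \tensor \sM^{k}$ (where Serre vanishing gives it for free) down to $\sF$ using the surjectivity of the multiplication maps from (b) together with global generation of $\sM$. Explaining why all three assertions must be carried through the induction together is the conceptual heart. A secondary technical point, relevant here since $k$ need only be $F$-finite rather than infinite, is the availability of a ``general'' hyperplane: this is handled by faithfully flat base change to $\overline{k}$, under which both the cohomological hypotheses and the conclusion of global generation descend, or alternatively by phrasing the regularity induction on $\bP^N$ purely through the Koszul complex so that no choice of hyperplane is required.
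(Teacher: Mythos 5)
Your proof is correct. The paper does not actually prove Theorem \ref{thm.Regularity} at all---it records it as a known result, citing \cite[Theorem 1.8.5]{LazarsfeldPositivity2}---and your argument (push $\sF$ forward along the finite morphism $\phi \: X \to \bP^N$ defined by $|\sM|$, apply Mumford's theorem for $0$-regular sheaves on $\bP^N$, then pull global generation back through the surjective counit $\phi^*\phi_*\sF \to \sF$, with base change to $\overline{k}$ to handle a possibly finite ground field) is essentially the standard proof of that cited theorem, so there is nothing in the paper to contrast it with.
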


The rest of this section is somewhat technical and deals with $\bQ$-divisors and test ideals.  Thus we suggest that the reader may wish to read Section \ref{MotvationalComputations} below first in order to learn the motivating ideas of this paper.

On a normal variety, a $\bQ$-divisor $\Delta$ is a divisor with rational coefficients, $\Delta = \sum a_i D_i$ where the $D_i$ are prime divisors.  With such a $\Delta$, we use $\lceil \Delta \rceil$ to denote $\sum \lceil a_i \rceil D_i$.  A $\bQ$-divisor $\Gamma$ is called \emph{$\bQ$-Cartier} if there exists an integer $n > 0$ such that $n\Gamma$ has integer coefficients and as such is a Cartier divisor.  In this case, the smallest such $n > 0$ is called the \emph{index of $\Gamma$}.

\begin{definition}
A \emph{pair $(X, \Delta)$} is the combined information of a normal integral scheme $X$ and an effective $\bQ$-divisor $\Delta$.
\end{definition}

Our next order of business is to recall the definition of the test ideal and parameter test ideal of a pair $(X, \Delta)$, as well as other notions of $F$-singularities.
Consider now the $e$-iterated absolute Frobenius map $F^e : X \to X$.  The source of $F^e$ is isomorphic to $X$, although as a variety over $k$, $\eta \circ F^e : X \to k$, it is different than $X$.  We notice that $(F^e)^! \omega_X^{\mydot} = (F^e)^! \eta^! k = \eta^! (F^e)^! k \cong \eta^! k = \omega_X^{\mydot}$.  Thus we have a trace map $F^e_* \omega_X^{\mydot} \to \omega_X^{\mydot}$. Taking cohomology yields $\Phi^e : F^e_* \omega_X \to \omega_X$.

\begin{definition}[\cite{SmithTestIdeals, BlickleMultiplierIdealsAndModulesOnToric, SchwedeTakagiRationalPairs}] \label{def.ParTestSub}
\label{def.ParamTestSubmodule}
Suppose that $X$ is a variety.  The \emph{parameter test submodule of $X$}, denoted $\tau(\omega_X)$, is the unique smallest $\O_X$-submodule $M$ of $\omega_X$, non-zero on any component of $X$, such that $\Phi^1(F_* M) \subseteq M$ where $\Phi^1 : F_* \omega_X \to \omega_X$ is the canonical dual of Frobenius.

Let $X = \Spec R$ be a normal variety and let $\Gamma \geq 0$ is a $\bQ$-divisor on $X$.  The \emph{parameter test submodule of $(X, \Gamma)$}, denoted $\tau(\omega_X, \Gamma)$, is the unique smallest non-zero $\O_X$-submodule of $\omega_X$ such that $\phi(F^e_* M) \subseteq M$ where $\phi$ ranges over all $\phi \in \Hom_{\O_X}(F^e_* \omega_X( \lceil (p^e - 1)\Gamma \rceil), \omega_X)$ and all $e > 0$ (notice $\omega_X \subseteq \omega_X( \lceil (p^e - 1)\Gamma \rceil)$).

If $\Gamma$ is not necessarily effective, then choose $A$ a Cartier divisor such that $\Gamma + A$ is effective (working locally if necessary).  We then define $\tau(\omega_X, \Gamma) = \tau(\omega_X, \Gamma + A) \tensor \O_X(A)$.  This is independent of the choice of $A$.
\end{definition}

Both $\tau(\omega_X)$ and $\tau(\omega_X, \Delta)$ exist and their formation commutes with localization and so they can be defined on any (not necessarily affine) variety.

\begin{definition}
 Suppose that $X$ is a normal variety and $\Delta$ is a $\bQ$-divisor on $X$.  Fix a canonical divisor $K_X$.  We define the test ideal $\tau(X, \Delta)$ to be $\tau(\omega_X, K_X + \Delta)$.
\end{definition}
Note $\tau(X, \Delta)$ is always an ideal sheaf of $X$ if $\Delta \geq 0$, see \cite[Lemma 2.29]{BlickleSchwedeTuckerTestAlterations} (otherwise, it is a fractional ideal sheaf).  We also note that $\tau(X, \Delta)$ is actually what is typically called the \emph{big test ideal} or \emph{non-finitistic test ideal} and denoted by $\tau_b(X, \Delta)$ or $\tld{\tau}(X, \Delta)$.  However, in the case that $K_X + \Delta$ is $\bQ$-Cartier, the only setting considered in this paper, $\tau_b(X, \Delta)$ and the classically defined $\tau(X, \Delta)$ coincide, \cite{TakagiInterpretationOfMultiplierIdeals, BlickleSchwedeTakagiZhang}.

We also record the following useful facts about (parameter) test ideals:
\begin{proposition} \cf \cite{BlickleMustataSmithDiscretenessAndRationalityOfFThresholds,BlickleSchwedeTakagiZhang, SchwedeTakagiRationalPairs,SchwedeTuckerTestIdealSurvey}
 Suppose that $(X, \Delta)$ is as above and fix $A$ to be a Cartier divisor on $X$.  Then
\begin{itemize}
 \item[(a)]  $\tau(X, \Delta + A) = \tau(X, \Delta) \tensor \O_X(-A)$.
 \item[(b)]  For all $1 \gg \varepsilon > 0$, we have that $\tau(X, \Delta + \varepsilon A) = \tau(X, \Delta)$.
 \item[(c)]  $\tau(\omega_X, \Delta + A) = \tau(\omega_X, \Delta) \tensor \O_X(-A)$.
 \item[(d)]  For all $1 \gg \varepsilon > 0$, we have that $\tau(\omega_X, \Delta + \varepsilon A) = \tau(\omega_X, \Delta)$.
\end{itemize}
\end{proposition}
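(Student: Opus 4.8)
The plan is to prove the two $\omega_X$-statements (c) and (d) first and then obtain (a) and (b) for free. Since $\tau(X,\Delta)=\tau(\omega_X,K_X+\Delta)$ by definition, applying (c) and (d) with $\Delta$ replaced by the (possibly non-effective) $\bQ$-divisor $K_X+\Delta$ gives at once
\[
\tau(X,\Delta+A)=\tau(\omega_X,(K_X+\Delta)+A)=\tau(\omega_X,K_X+\Delta)\tensor\O_X(-A)=\tau(X,\Delta)\tensor\O_X(-A),
\]
and likewise $\tau(X,\Delta+\varepsilon A)=\tau(X,\Delta)$. Here one uses that the construction $\tau(\omega_X,-)$, and all the assertions below, make sense for a non-effective first argument through the twisting convention of Definition \ref{def.ParamTestSubmodule}. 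So it suffices to treat (c) and (d).

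For (c) I would argue locally, say on $X=\Spec R$ with $R$ normal, where the Cartier divisor $A$ is principal; write $\O_X(-A)=aR$ for a fixed $a\in\Frac(R)$ (for non-effective $\Delta+A$ one first reduces to the effective case via the twisting convention, which is itself the effective case of (c)). Because $A$ is integral, $\lceil (p^e-1)(\Delta+A)\rceil=\lceil (p^e-1)\Delta\rceil+(p^e-1)A$, so multiplication by $a^{p^e-1}$ carries $\omega_X(\lceil (p^e-1)(\Delta+A)\rceil)$ isomorphically onto $\omega_X(\lceil (p^e-1)\Delta\rceil)$. The key point is that precomposition with this multiplication,
\[
\phi\longmapsto\widetilde\phi, \qquad \widetilde\phi(F^e_*y)=\phi\big(F^e_*(a^{p^e-1}y)\big),
\]
is a bijection $\Hom_{\O_X}(F^e_*\omega_X(\lceil (p^e-1)\Delta\rceil),\omega_X)\to\Hom_{\O_X}(F^e_*\omega_X(\lceil (p^e-1)(\Delta+A)\rceil),\omega_X)$ for each $e$, with inverse precomposition by $a^{-(p^e-1)}$; it is $\O_X$-linear since $a^{p^e-1}r^{p^e}=r^{p^e}a^{p^e-1}$. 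A one-line computation then gives, for any submodule $M\subseteq\omega_X$ and $m\in M$, that $\widetilde\phi(F^e_*(am))=\phi(F^e_*(a^{p^e}m))=a\,\phi(F^e_*m)$, using only that $a^{p^e-1}\cdot a=a^{p^e}$ is a $p^e$-th power. Hence $N=aM=M\tensor\O_X(-A)$ is stable under all the maps defining $\tau(\omega_X,\Delta+A)$ if and only if $M$ is stable under all the maps defining $\tau(\omega_X,\Delta)$. As $M\mapsto aM$ is an inclusion-preserving bijection of nonzero submodules, it matches the unique smallest stable submodules, so $\tau(\omega_X,\Delta+A)=a\,\tau(\omega_X,\Delta)=\tau(\omega_X,\Delta)\tensor\O_X(-A)$. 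The construction is local, independent of the choice of $a$ (two choices differ by a unit), and $\tau(\omega_X,-)$ commutes with localization, so the local identities glue.

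For (d) the situation is genuinely different: for a fixed $e$ one does have $\lceil(p^e-1)(\Delta+\varepsilon A)\rceil=\lceil(p^e-1)\Delta\rceil$ once $\varepsilon$ is small, but this fails for $e\gg 0$, so one cannot simply match up the defining maps as in (c). The clean route is to invoke the discreteness and rationality of $F$-jumping numbers along the ray $t\mapsto\tau(\omega_X,\Delta+tA)$ from \cite{BlickleMustataSmithDiscretenessAndRationalityOfFThresholds,BlickleSchwedeTakagiZhang}: the set of $t$ at which the test submodule changes is discrete and $\tau$ is right continuous along the ray, so there is no jump in some interval $(0,\varepsilon_0)$ and $\tau(\omega_X,\Delta+\varepsilon A)=\tau(\omega_X,\Delta)$ for all $1\gg\varepsilon>0$ (reducing to $A$ effective via (c) if desired). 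I expect this appeal to the discreteness theorem to be the only non-formal ingredient: parts (a), (b) are immediate from (c), (d), and (c) itself is the bookkeeping of the twisting bijection above, so the main obstacle is entirely concentrated in the cited discreteness/rationality result underlying (d).
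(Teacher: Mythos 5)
The paper records this proposition without any proof at all---it is stated as a known fact with \cf citations to the literature---so there is no internal argument to compare against, and your proposal actually supplies more than the paper does. Your reduction of (a), (b) to (c), (d) via $\tau(X,\Delta)=\tau(\omega_X,K_X+\Delta)$ is correct, and you rightly note that for a non-effective first argument, (c) is essentially the well-definedness assertion built into the twisting convention of Definition \ref{def.ParamTestSubmodule}. Your proof of (c) is the standard correspondence of $p^{-e}$-linear maps under twisting by $(p^e-1)A$, and the key computation $\widetilde\phi(F^e_*(am))=a\,\phi(F^e_*m)$ is right. One small patchable gap: the map $M\mapsto aM$ is a bijection from submodules of $\omega_X$ onto submodules of $a\,\omega_X$, not onto all submodules of $\omega_X$, so a priori the smallest $(\Delta+A)$-stable submodule need not be in its image and ``matching the smallest stable submodules'' needs one more line. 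Namely: your computation shows $a\,\tau(\omega_X,\Delta)$ is $(\Delta+A)$-stable, hence $\tau(\omega_X,\Delta+A)\subseteq a\,\tau(\omega_X,\Delta)\subseteq a\,\omega_X$; then $a^{-1}\tau(\omega_X,\Delta+A)$ is a nonzero $\Delta$-stable submodule of $\omega_X$, which gives the reverse inclusion $a\,\tau(\omega_X,\Delta)\subseteq\tau(\omega_X,\Delta+A)$.

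Two remarks on (d). First, deferring to \cite{BlickleMustataSmithDiscretenessAndRationalityOfFThresholds,BlickleSchwedeTakagiZhang} is legitimate---it is exactly what the paper itself does---but be aware that statement (d) \emph{is} the right-continuity statement: in those references right continuity is proved directly (via test elements and Noetherian stabilization of the sum $\sum_e\sum_\phi \phi(F^e_*(c\,\omega_X))$) as a step \emph{toward} discreteness of jumping numbers, rather than being deduced from discreteness as your phrasing suggests; so the logical content you are importing is (d) itself, not a consequence of a stronger theorem. Second, your parenthetical claim that for fixed $e$ one has $\lceil(p^e-1)(\Delta+\varepsilon A)\rceil=\lceil(p^e-1)\Delta\rceil$ once $\varepsilon$ is small is false whenever $(p^e-1)\Delta$ has integer coefficient along some component of $A$ (for instance $\Delta=0$), since the ceiling jumps from the right at integers. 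This only reinforces your point that the naive term-by-term matching used in (c) cannot work for (d), but the remark as written is incorrect.
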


Suppose now that $(X, \Delta \geq 0)$ is a pair such that $K_X + \Delta$ is $\bQ$-Cartier with index not divisible by $p > 0$. Choose $e > 0$ such that $(p^e - 1)(K_X + \Delta)$ is Cartier.  Then associated to $\Delta$ and $e$ we obtain a canonically determined (up to multiplication by a unit) map $\phi_{\Delta} : F^e_* \sL_{e, \Delta} \to \O_X$, where $\sL_{e, \Delta} = \O_X( (1-p^e)(K_X + \Delta))$ is a line bundle.  Conversely, given a map $\phi : F^e_* \sL \to \O_X$, it uniquely determines an effective $\bQ$-divisor $\Delta_{\phi}$ such that $\O_X\left( (1-p^e)(K_X + \Delta_{\phi}) \right) \cong \sL$. See \cite{SchwedeFAdjunction} or \cite{SchwedeTuckerTestIdealSurvey} for further discussion.  In this context however, $\tau(X, \Delta)$ is the unique smallest non-zero ideal $J \subseteq \O_X$ such that
\[
\phi_{\Delta} \big(F^e_* (J \cdot \sL_{e, \Delta} )\big) \subseteq J.
\]
In fact, it is easy yet important to observe that we have equality:
\begin{equation}
 \phi_{\Delta} \big(F^e_* (J \cdot \sL_{e, \Delta} )\big) = J.
\end{equation}

We also record the following fact for future use.  First recall that if $x \in X$ is a smooth point and $D$ is a $\bQ$-divisor on $X$, then we define the multiplicity of $D$ at $x$, denoted $\mult_x D$ to be ${1 \over n} \mult_x (nD)$ where $n > 0$ is such that $nD$ is Cartier at $x \in X$.  See for example \cite[Page 163]{LazarsfeldPositivity2}.

\begin{lemma} \cf \cite[Proposition 9.3.2]{LazarsfeldPositivity2}
\label{lem.MultiplicityImpliesTestContainment}
Fix $X$ to be a smooth $n$-dimensional variety and suppose that $D$ is a $\bQ$-divisor with multiplicity $\geq l$ at a (possibly non-closed) point $x \in X$ of codimension $l$.  Then $\tau(X, D) \subseteq \bq_x$ where $\bq_x$ is the prime ideal sheaf corresponding to $x \in X$.
\end{lemma}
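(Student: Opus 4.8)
The plan is to reduce the statement to a purely local question about a regular local ring and then settle that question with the description of test ideals on regular $F$-finite rings due to Blickle--Musta\c{t}\u{a}--Smith, together with an elementary pigeonhole estimate.

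First I would localize. Since $X$ is smooth, $\tau(X,D)$ is the usual test ideal of the $\bQ$-divisor $D$, and its formation commutes with localization. Writing $(R,\mathfrak m) = \O_{X,\bq_x}$, which is a regular local $F$-finite ring of dimension $l = \codim x$, I get $\tau(X,D)_{\bq_x} = \tau(R, D)$, where the localized divisor still satisfies $\mult_{\mathfrak m} D \geq l$. The key observation is that, because $\bq_x$ is prime, an ideal sheaf satisfies $\tau(X,D) \subseteq \bq_x$ if and only if $\tau(X,D)$ contains no element outside $\bq_x$, i.e. if and only if its localization $\tau(R,D)$ is a \emph{proper} ideal of the local ring $R$. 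Thus it suffices to prove $\tau(R,D) \subseteq \mathfrak m$.

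Next I would make $D$ principal. Because $R$ is regular local, hence a UFD, each prime component of $\Supp D$ is principal near $x$, so clearing denominators I may choose $N>0$ and $h\in R$ with $ND = \divisor(h)$; consequently $\tau(R,D) = \tau(R,(h)^{1/N})$. The multiplicity hypothesis becomes $\ord_{\mathfrak m}(h) = N\,\mult_x D \geq Nl$, that is, $h \in \mathfrak m^{Nl}$. Now I invoke the formula (see \cite{BlickleMustataSmithDiscretenessAndRationalityOfFThresholds})
\[
\tau\big(R,(h)^{t}\big) = \sum_{e\ge 0}\big(h^{\lceil t p^e\rceil}\big)^{[1/p^e]},
\]
where $(-)^{[1/p^e]}$ is the (inclusion-preserving) root operation adjoint to Frobenius powers $(-)^{[p^e]}$. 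Setting $t = 1/N$ and $m_e = \lceil p^e/N\rceil$, from $N m_e \geq p^e$ we get $h^{m_e} \in \mathfrak m^{N l\, m_e} \subseteq \mathfrak m^{l p^e}$.

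The heart of the argument is then a pigeonhole count: any monomial of total degree $l p^e$ in a regular system of parameters $x_1,\dots,x_l$ must have some exponent $\geq p^e$, since otherwise every exponent is $\leq p^e-1$ and the total degree is at most $l(p^e-1) < l p^e$. Hence $\mathfrak m^{l p^e} \subseteq \mathfrak m^{[p^e]} = (x_1^{p^e},\dots,x_l^{p^e})$. Applying $(-)^{[1/p^e]}$ and the identity $\big(\mathfrak m^{[p^e]}\big)^{[1/p^e]} = \mathfrak m$ gives $\big(h^{m_e}\big)^{[1/p^e]} \subseteq \mathfrak m$ for every $e$, so $\tau(R,D) = \sum_e \big(h^{m_e}\big)^{[1/p^e]} \subseteq \mathfrak m$, which is what we wanted. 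The conceptual steps (localization, the UFD reduction, and the pigeonhole) are routine; the only points needing care are the compatibility of the $\bQ$-divisor test ideal $\tau(R,D)$ with the principal-ideal test ideal $\tau(R,(h)^{1/N})$ and the validity of the summation formula over a general $F$-finite regular local ring, both standard but worth citing precisely. One could alternatively bypass the explicit root operation by reducing via monotonicity to $\tau(R,\mathfrak m^{l})$ and using that the $F$-pure threshold of $\mathfrak m$ in an $l$-dimensional regular local ring is exactly $l$, but the pigeonhole computation above is the cleanest self-contained route and carries the real technical content.
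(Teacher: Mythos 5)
Your proof is correct, and it takes a genuinely different route from the paper's. The paper transports the characteristic-zero argument of \cite[Proposition 9.3.2]{LazarsfeldPositivity2} directly: blow up (the closure of) $x$, observe that the exceptional divisor $E$ dominating $x$ occurs in $K_Y - \pi^*(K_X + D)$ with coefficient at most $(l-1) - l = -1$, conclude $\pi_* \O_Y(\lceil K_Y - \pi^*(K_X+D) \rceil) \subseteq \bq_x$, and then quote the standard containment of $\tau(X,D)$ in this pushforward, valid for any proper birational model (\cf \cite{SchwedeTuckerTestIdealSurvey, HaraWatanabeFRegFPure}). You instead stay entirely local and Frobenius-theoretic: after localizing at $\bq_x$ and writing $ND = \divisor(h)$ in the regular local ring $R$ (using that $R$ is a UFD), you invoke the Blickle--Musta\c{t}\u{a}--Smith description of $\tau(R, h^{1/N})$ via Frobenius roots \cite{BlickleMustataSmithDiscretenessAndRationalityOfFThresholds}, and your pigeonhole estimate $\bm^{l p^e} \subseteq \bm^{[p^e]}$ together with $(\bm^{[p^e]})^{[1/p^e]} = \bm$ (flatness of Frobenius) plays exactly the role that the discrepancy computation plays in the paper. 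The steps you flag as needing care are indeed standard and your use of them is sound: test ideals commute with localization, $\tau(R, \frac{1}{N}\divisor(h)) = \tau(R, h^{1/N})$, and the sum formula holds over $F$-finite regular rings because the ideals $(h^{\lceil tp^e \rceil})^{[1/p^e]}$ increase with $e$ and stabilize to the test ideal. As for what each approach buys: the paper's proof is shorter and thematically apt, exhibiting the lemma as literally the multiplier-ideal argument passed through the test-ideal/multiplier-ideal comparison, but it requires that comparison as a black box plus a birational model; yours is self-contained within characteristic $p$, needs no blowup or resolution, and isolates the genuine char-$p$ content (in effect, that the $F$-pure threshold of the maximal ideal of an $l$-dimensional regular local ring is $l$). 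One shared caveat: both arguments implicitly treat $D$ as effective near $x$, as it is in the paper's application (Theorem \ref{thm.HypersurfaceDegreeApplication}); in your setup this is what guarantees $h \in R$ rather than merely $h \in \Frac R$.
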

\begin{proof}
Suppose $\pi : Y \to X$ is the blow up of the point $x \in X$ and set $E$ to be the component of the exceptional divisor dominating $x$.  Observe that $E$'s coefficient in $K_Y - \pi^* (K_X + D)$ is $\leq l - 1 - l = -1$.  Therefore, $\pi_* \O_Y(\lceil K_Y - \pi^*(K_X + D) \rceil) \subseteq \bq_x$.  It follows from the usual arguments relating test ideals and multiplier ideals that $\tau(X, D) \subseteq \pi_* \O_Y(\lceil K_Y - \pi^*(K_X + D) \rceil)$, \cf \cite{SchwedeTuckerTestIdealSurvey, HaraWatanabeFRegFPure}, and the result follows.
\end{proof}

\begin{definition} \cite{FujinoSchwedeTakagiSupplements, BlickleBoeckleCartierModulesFiniteness, BlickleTestIdealsViaAlgebras}
 Suppose that $(X, \Delta \geq 0)$ is a pair such that $K_X + \Delta$ is $\bQ$-Cartier with index not divisible by $p$.  Fix $\sL = \sL_{e, \Delta}$ and $\phi_\Delta : F^e_* \sL \to \O_X$ as above.  Then we define the \emph{non-$F$-pure ideal of $(X, \Delta)$}, denoted $\sigma(X, \Delta)$ or $\sigma(X, \phi_{\Delta})$, to be the unique largest ideal $J$ of $\O_X$ such that
\[
\phi_{\Delta} \big(F^e_* (J \cdot \sL )\big) = J.
\]
For any (possibly non-normal) reduced scheme over $k$ with a given $\phi : F^e_* \sL \to \O_X$, we define $\sigma(\phi)$ similarly.

For an arbitrary variety $X$, we define $\sigma(\omega_X)$ to be the unique largest submodule $M$ of $\omega_X$ such that $\Phi (F_* M) = M$ where $\Phi : F^e_* \omega_X \to \omega_X$ is as above.
\end{definition}

\begin{remark}
While we defined $\tau(X, \Delta)$ for possible non-effective $\Delta$, we only define $\sigma(X, \Delta)$ for $\Delta \geq 0$.
\end{remark}

\begin{remark}
Alternately, set $J_0 = \O_X$ and define recursively $J_{n + 1} := \phi_{\Delta}(F^e_* (J_n \cdot \sL))$.  Then it can be shown that $J_n = J_{n + 1} = \ldots$ for $n \gg 0$ \cite{Gabber.tStruc}, \cf \cite{BlickleSchwedeTakagiZhang,LyubeznikFModulesApplicationsToLocalCohomology,HartshorneSpeiserLocalCohomologyInCharacteristicP}.  It follows immediately that $\sigma(X, \Delta) = J_n$ for $n \gg 0$.  Likewise, $\sigma(\omega_X) = \Phi^e(F^e_* \omega_X)$ for $e \gg 0$.
\end{remark}

\begin{definition}
With notation as above:
\begin{itemize}
 \item[(i)]  we say that $(X, \Delta \geq 0)$ is \emph{strongly $F$-regular}, if $\tau(X, \Delta) = \O_X$
 \item[(ii)]  we say that $X$ is \emph{$F$-rational} if $\tau(\omega_X) = \omega_X$.
 \item[(iii)]  we say that $(X, \Delta \geq 0)$ is \emph{sharply $F$-pure} if $\sigma(X, \Delta) = \O_X$.
 \item[(iv)]  we say that $X$ is \emph{$F$-injective} if $\Phi^i : \myH^i(F^e_* \omega_X^{\mydot}) \to \myH^i(\omega_X^{\mydot})$ is surjective for each $i$.
\end{itemize}
\end{definition}

\begin{remark}
Strongly $F$-regular pairs are the analog of log terminal pairs in characteristic zero \cite{HaraWatanabeFRegFPure}.  $F$-rational varieties are the analog of varieties with rational singularities in characteristic zero \cite{SmithFRatImpliesRat, HaraRatImpliesFRat, MehtaSrinivasRatImpliesFRat}.  $F$-pure pairs are the analog of log canonical pairs in characteristic zero \cite{HaraWatanabeFRegFPure}.  Finally, $F$-injective varieties are the analog of varieties with Du Bois singularities in characteristic zero \cite{SchwedeFInjectiveAreDuBois}.
\end{remark}

\section{Motivational computations and easy cases}
\label{MotvationalComputations}

Suppose that $X$ is a smooth (or simply normal) projective algebraic variety over an algebraically closed field $k$ of characteristic $p > 0$.  Further suppose that $\sL$ is any line bundle on $X$.  Consider the two vector subspaces:
\[
{\small
\begin{array}{rccccl}
 \tauCohomology^0(X, \omega_X \tensor \sL) & = & \bigcap_{f : Y \to X} & \Image\Big(H^0(X, f_* (\omega_Y \tensor_{\O_Y} f^* \sL) ) & \to & H^0(X, \omega_X \tensor_{\O_X} \sL) \Big)\\
 \FCohomology^0(X, \omega_X \tensor \sL) & = & \bigcap_{e \geq 0} & \Image \Big( H^0(X, F^e_* (\omega_X \tensor_{\O_X} \sL^{p^e}) ) & \to & H^0(X, \omega_X \tensor_{\O_X} \sL) \Big)
 \end{array}
}
\]
where the first intersection runs over all finite maps $f : Y \to X$ of irreducible varieties (note it is harmless to only consider reduced or even normal $Y$).  This first vector subspace $T^0$ was explored in \cite{BlickleSchwedeTuckerTestAlterations} where it was used to construct transformation rules for test ideals under birational maps.

Notice that we always have
\[
\tauCohomology^0(X, \omega_X \tensor \sL) \subseteq \FCohomology^0(X, \omega_X \tensor \sL)
\]
Furthermore, because $H^0(X, \omega_X \tensor_{\O_X} \sL)$ is a finite dimensional $k$-vector space, we see that both intersections actually stabilize.  In other words, both intersections are equal to one of their members.

We are primarily interested in the case where $\sL$ is ample or in some weaker sense positive.  Thus as a first step, consider the following:

\begin{lemma}
\label{lem.StabilizingTauForBigN}
Suppose that $X$ is a smooth projective variety and $\sL$ is an ample line bundle.  Then there exists an integer $n_0$, depending on $X$ and $\sL$, such that
\[
\tauCohomology^0(X, \omega_X \tensor \sL^n) = \FCohomology^0(X, \omega_X \tensor \sL^n) = H^0(X, \omega_X \tensor_{\O_X} \sL^n)
\]
for all $n \geq n_0$.
In other words, the two linear systems associated to $\tauCohomology^0(X, \omega_X \tensor \sL^n)$ and $\FCohomology^0(X, \omega_X \tensor \sL^n)$ are complete linear systems.
\end{lemma}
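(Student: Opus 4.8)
The plan is to exploit the chain of inclusions $\tauCohomology^0(X, \omega_X \tensor \sL^n) \subseteq \FCohomology^0(X, \omega_X \tensor \sL^n) \subseteq H^0(X, \omega_X \tensor \sL^n)$ recorded above and to show that both outer objects agree with $H^0$ once $n \gg 0$. Both reductions run on the same mechanism: an $\O_X$-linear trace $\phi \colon \sG \to \omega_X$ which is surjective as a map of sheaves induces, after tensoring with $\sL^n$ and passing to cohomology, an exact piece $H^0(X, \sG \tensor \sL^n) \to H^0(X, \omega_X \tensor \sL^n) \to H^1(X, (\ker \phi) \tensor \sL^n)$, so that surjectivity on global sections follows as soon as the $H^1$ on the right vanishes. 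Proving $\FCohomology^0 = H^0$ is the clean part; proving $H^0 \subseteq \tauCohomology^0$ is the genuinely uniform statement and is where I expect the difficulty to concentrate.

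For $\FCohomology^0$, smoothness of $X$ makes the Cartier operator, i.e.\ the dual-of-Frobenius $\Phi^1 \colon F_* \omega_X \to \omega_X$, surjective, and hence every iterate $\Phi^e \colon F^e_* \omega_X \to \omega_X$ is surjective. Writing $C_e := \ker(\Phi^e)$ and factoring $\Phi^e = \Phi^1 \circ F_*(\Phi^{e-1})$ produces the short exact sequences
\[
0 \to F_* C_{e-1} \to C_e \to C_1 \to 0 .
\]
Tensoring with $\sL^n$ and using the projection formula gives $(F_* C_{e-1}) \tensor \sL^n \cong F_*(C_{e-1} \tensor \sL^{np})$, so since $F$ is affine $H^1(X, (F_* C_{e-1}) \tensor \sL^n) = H^1(X, C_{e-1} \tensor \sL^{np})$. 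Choosing $n_0$ by Serre vanishing so that $H^1(X, C_1 \tensor \sL^m) = 0$ for \emph{all} $m \ge n_0$, an induction on $e$ (using $np \ge n \ge n_0$ in the inductive step together with the long exact sequence of the displayed sequence) yields $H^1(X, C_e \tensor \sL^n) = 0$ for every $e$ and every $n \ge n_0$ simultaneously. Consequently each map $H^0(X, F^e_*(\omega_X \tensor \sL^{np^e})) \to H^0(X, \omega_X \tensor \sL^n)$ is surjective, so the decreasing intersection defining $\FCohomology^0$ equals $H^0(X, \omega_X \tensor \sL^n)$ for $n \ge n_0$. What makes this work uniformly in $e$ is that the graded piece $C_1$ is a single fixed coherent sheaf and that Serre vanishing is invoked in the form ``for all $m \ge n_0$,'' which covers every exponent $np^i \ge n$.

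For $\tauCohomology^0$ I must instead show that for \emph{every} finite map $f \colon Y \to X$ of irreducible (normal) varieties the trace induces a surjection $H^0(X, f_*(\omega_Y \tensor f^* \sL^n)) \to H^0(X, \omega_X \tensor \sL^n)$, with a single $n_0$ valid for all $f$ at once. Since $X$ is smooth it is $F$-rational, so $\tau(\omega_X) = \omega_X$; combined with the description of the parameter test submodule as the stable smallest image of the trace maps over all finite covers \cite{BlickleSchwedeTuckerTestAlterations}, this forces every sheaf-level trace $\Tr_f \colon f_* \omega_Y \to \omega_X$ to be surjective, with some coherent kernel $Q_f$. Twisting and taking cohomology as before, the desired global surjectivity reduces to $H^1(X, Q_f \tensor \sL^n) = 0$ for all $n \ge n_0$. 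This is the main obstacle: the sheaves $Q_f$ vary with $f$ and grow arbitrarily complicated as the covers grow, so a naive Serre vanishing produces a bound depending on $f$. The point to push through is a uniform, cover-independent vanishing --- a bound on the Castelnuovo--Mumford regularity of $f_* \omega_Y$ with respect to $\sL$ that is the same for all $f$, reflecting the uniform positivity of $f_* \O_Y$ already visible on $\bP^1$, where the nontrivial summands of $f_* \O_Y$ are all $\le -1$ independent of $f$. Granting such a uniform estimate, $H^1(X, Q_f \tensor \sL^n) = 0$ for all $f$ and all $n \ge n_0$, every trace is surjective on global sections, and hence $\tauCohomology^0(X, \omega_X \tensor \sL^n) = H^0(X, \omega_X \tensor \sL^n)$ for $n \ge n_0$, completing the three-way equality. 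One could instead use the downward-directedness of the system of images to reduce to a single cofinal cover, but bounding the $H^1$ of its trace-kernel still requires precisely this uniform estimate, which is where I expect the real work to lie.
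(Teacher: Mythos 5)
Your argument for the $\FCohomology^0$ half is correct, and it takes a genuinely different, more elementary route than the paper: you control the kernels $C_e = \ker(\Phi^e)$ via the exact sequences $0 \to F_*C_{e-1} \to C_e \to C_1 \to 0$, so that Serre vanishing applied to the \emph{single} sheaf $C_1$ (in the form ``for all twists $\geq n_0$'') propagates by induction to all $e$ at once. The paper instead passes to the section ring $S = \bigoplus_n H^0(X, \sL^n)$, identifies the maps on graded canonical modules $\Phi^e : F^e_*\omega_S \to \omega_S$ with the sheaf-level trace maps, and uses stabilization of the images \cite{Gabber.tStruc} together with surjectivity of $\Phi$ on the punctured spectrum (where $S$ is smooth) to conclude that the stable image agrees with $\omega_S$ in all large degrees. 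Either mechanism settles this half.

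The genuine gap is in the $\tauCohomology^0$ half, precisely where you flagged it: the sentence ``granting such a uniform estimate'' concedes the entire content of the statement rather than proving it. A cover-independent regularity bound on $f_*\omega_Y$ (equivalently, vanishing $H^1(X, Q_f \tensor \sL^n) = 0$ for all finite covers $f$ simultaneously, $n \geq n_0$) cannot come from Serre vanishing, since the kernels $Q_f$ form an unbounded family of coherent sheaves as $f$ ranges over covers of arbitrary degree and wild ramification; the analogy with $\bP^1$ is not an argument. Your fallback via directedness also fails as stated, because the single cover computing $\tauCohomology^0(X, \omega_X \tensor \sL^n)$ may depend on $n$. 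The paper absorbs the uniformity into a finiteness statement on the cone: with $S$ as above and $\omega_S = \bigoplus_n H^0(X, \omega_X \tensor \sL^n)$, one has $\bigoplus_n \tauCohomology^0(X, \omega_X \tensor \sL^n) \supseteq \tau(\omega_S)$, the parameter test submodule of the section ring \cite{BlickleSchwedeTuckerTestAlterations}. Since $S$ is smooth, hence $F$-rational, away from the irrelevant maximal ideal, the quotient $\omega_S/\tau(\omega_S)$ is supported at the vertex of the cone, hence has finite length, hence vanishes in all degrees above some $m_0$; so $\tauCohomology^0 = H^0$ in those degrees. In other words, the ``uniform over all covers'' input you are missing is exactly the existence of $\tau(\omega_S)$ as a single coherent graded module together with its triviality on the $F$-rational locus --- that is the key idea of the paper's proof.
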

\begin{proof}
Consider the section ring $S = \oplus_{n \in \bZ} H^0(X, \sL^n)$.  The canonical module of this ring is $\omega_S = \oplus_{n \in \bZ} H^0(X, \omega_X \tensor \sL^n)$.

We first show that $\FCohomology^0(X, \omega_X \tensor \sL^n) = H^0(X, \omega_X \tensor_{\O_X} \sL^n)$ since that is conceptually easier.
Now the maps $\Phi^e : F^e_* \omega_S \to \omega_S$ are induced by the maps $\phi^e : H^0(X, F^e_* (\omega_X \tensor_{\O_X} \sL^{mp^e}) ) \to H^0(X, \omega_X \tensor_{\O_X} \sL^m)$ in the obvious way, sending $H^0(X, F^e_* (\omega_X \tensor_{\O_X} \sL^{mp^e}) ) \to H^0(X, F^e_* (\omega_X \tensor_{\O_X} \sL^{m}) )$ and sending $H^0(X, F^e_* (\omega_X \tensor_{\O_X} \sL^{n}) )$ to zero for $n$ not divisible by $p^e$, \cf \cite[Section 5.3]{SchwedeSmithLogFanoVsGloballyFRegular}.

Now, the image of $\Phi^e$ stabilizes for $e \gg 0$ by \cite{Gabber.tStruc}, \cf \cite{BlickleSchwedeTakagiZhang}.  In other words, $\Phi^e(F^e_* \omega_S) = \Phi^{e+1}(F^{e+1}_* \omega_S)$ for all sufficiently large $e$.  Furthermore, it is surjective outside of the irrelevant ideal $S_+ = \oplus_{n \geq 1} S_n$ because $S$ is smooth on the punctured spectrum.  Thus $\bigcap_{e \geq 0} \Phi^e(F^e_* \omega_S)$ is contained within $S_{\geq n_0} \cdot \omega_S$ for some $n_0 \geq 0$.  But this implies that $\FCohomology^0(X, \omega_X \tensor \sL^n) = H^0(X, \omega_X \tensor_{\O_X} \sL^n)$ for all $n > n_0$.

For $\tauCohomology^0(X, \omega_X \tensor \sL^n)$ we use a similar idea.   However, in this case it follows that
\[
\oplus_{n \in \bZ} \tauCohomology^0(X, \omega_X \tensor \sL^n) \supseteq \tau(\omega_S)
\]
where $\tau(\omega_S)$ is the parameter-test submodule \cite{BlickleSchwedeTuckerTestAlterations}.  Again, because $S$ is smooth on the punctured spectrum, $\tau(\omega_S) \supseteq S_{\geq m_0} \cdot \omega_S$ for some $m_0 > 0$ and so $\tauCohomology^0(X, \omega_X \tensor \sL^n) = H^0(X, \omega_X \tensor_{\O_X} \sL^n)$ for all $n > m_0$.
\end{proof}

\begin{remark}
In fact, the above proof shows that the condition that $X$ is smooth can be weakened substantially.  In particular, one still has $\FCohomology^0(X, \omega_X \tensor \sL^n) = H^0(X, \omega_X \tensor_{\O_X} \sL^n)$ for $n \gg 0$ if one assumes that $X$ has $F$-injective singularities.  More generally one has $\tauCohomology^0(X, \omega_X \tensor \sL^n) = H^0(X, \omega_X \tensor_{\O_X} \sL^n)$ for $n \gg 0$ if one assumes that $X$ has $F$-rational singularities.
\end{remark}

\subsection{Curves}

Suppose that $C$ is a smooth curve of genus $g$ and further suppose that $\sL$ is an ample line bundle of degree $d > 0$ on $C$.  The map
\[
\phi_{C, \sL} : H^0(C, F^e_* (\omega_C \tensor \sL^{p^e}) ) \to H^0(C, \omega_C \tensor \sL)
\]
need not be surjective in general, even if $H^0(C, F^e_* \omega_C) \to H^0(C, \omega_C)$ is surjective, see \cite{TangoOnTheBehaviorOfVBAndFrob}.  However, if $d \geq {2g - 2 \over p}$, then it is surjective by \cite[Lemma 10]{TangoOnTheBehaviorOfVBAndFrob}.  Thus $\FCohomology^0(X, \omega_X \tensor \sL) = H^0(X, \omega_X \tensor_{\O_X} \sL)$ for all $\sL$ of degree at least ${2g - 2 \over p}$.  However, even if it is not surjective one can still ask whether or not the linear system associated to $\FCohomology^0(X, \omega_X \tensor \sL)$ is base-point-free or more generally induces an embedding.  

\begin{theorem}
\label{thm.TauCohomologyForCurves}
With notation as above, if $\sL$ has degree at least 2, then the linear system associated to $\tauCohomology^0(C, \omega_C \tensor \sL) \subseteq \FCohomology^0(C, \omega_C \tensor \sL)$ is base-point-free.  If $\sL$ has degree at least 3, then $\tauCohomology^0(C, \omega_C \tensor \sL) \subseteq \FCohomology^0(C, \omega_C \tensor \sL)$ induces an embedding of $C$ into projective space.
\end{theorem}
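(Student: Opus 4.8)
The plan is to translate base-point-freeness into a statement about cohomology classes via Serre duality on $C$ and then win by a degree count; I will not need the test-ideal machinery at all, which is appropriate for this ``easy case.'' By the remark in the definition of $\tauCohomology^0$ we may restrict the intersection to finite maps $f \colon Y \to C$ with $Y$ a normal (hence regular) projective curve. For such $f$, Grothendieck duality gives $\omega_Y \cong f^! \omega_C$, and the trace $\Tr_f \colon f_* \omega_Y \to \omega_C$, twisted by $\sL$, induces $H^0(Y, \omega_Y \tensor f^*\sL) \to H^0(C, \mathcal N)$ where $\mathcal N = \omega_C \tensor \sL$. By Serre duality this map is dual to the pullback $f^* \colon H^1(C, \sL^{-1}) \to H^1(Y, f^*\sL^{-1})$, so that
\[
\tauCohomology^0(C, \mathcal N) = \Big( \textstyle\sum_f \Ker\big(f^* \colon H^1(C, \sL^{-1}) \to H^1(Y, f^*\sL^{-1})\big)\Big)^{\perp}.
\]
Since $H^0(C, \mathcal N)$ is finite dimensional and the covers form a directed family (any two are dominated by a component of their fiber product), this sum of kernels is realized by a single dominating cover $f_0$.

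First I would treat base-point-freeness for $d = \deg \sL \ge 2$. A closed point $x$ is a base point of $\tauCohomology^0(C, \mathcal N)$ exactly when the extension class $\ell_x \in H^1(C, \sL^{-1})$ of $0 \to \sL^{-1} \to \sL^{-1}(x) \to k(x) \to 0$ lies in $\Ker f_0^*$; note $\ell_x \ne 0$ because $\deg \sL^{-1}(x) = 1-d < 0$ forces $H^0(C, \sL^{-1}(x)) = 0$. The engine of the proof is that for any finite cover $f$ of degree $m$,
\[
\deg\big(f^*\sL^{-1}(f^*x)\big) = m(-d) + m = m(1-d) < 0,
\]
so $H^0(Y, f^*\sL^{-1}(f^*x)) = 0$ and the boundary map $H^0(f^*k(x)) \to H^1(Y, f^*\sL^{-1})$ of the pulled-back sequence is injective. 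By functoriality of the boundary map, $f^*\ell_x$ is the image of the nonzero pullback of the canonical generator of $H^0(k(x))$, hence $f^*\ell_x \ne 0$ for every $f$, in particular for $f_0$. Thus $\ell_x \notin \Ker f_0^*$ and $x$ is not a base point.

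For the embedding statement with $d \ge 3$ I would run the same argument for every length-two subscheme $Z \subseteq C$, covering both $Z = x + y$ with $x \ne y$ (separation of points) and $Z = 2x$ (separation of tangent directions). The relevant object is now the two-dimensional space $\ell_Z = \Image\big(H^0(\O_Z) \to H^1(C, \sL^{-1})\big)$, and base-point-freeness together with the surjectivity $\tauCohomology^0(C, \mathcal N) \twoheadrightarrow H^0(\mathcal N \tensor \O_Z)$ for all such $Z$ yields a closed embedding; dually this surjectivity amounts to $\ell_Z \cap \Ker f_0^* = 0$. Since $\deg\big(f^*\sL^{-1}(f^* Z)\big) = m(2-d) < 0$ once $d \ge 3$, one has $H^0(Y, f^*\sL^{-1}(f^* Z)) = 0$, the boundary map on $Y$ is injective, and — using that $H^0(\O_Z) \to H^0(f^*\O_Z)$ is injective because $f$ is flat (a finite dominant morphism of regular curves) — the restriction of $f^*$ to $\ell_Z$ is injective. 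Hence $\ell_Z \cap \Ker f_0^* = 0$ for all $Z$, and the subsystem separates points and tangents.

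I expect the main obstacle to be the careful setup of the Serre--Grothendieck duality dictionary: identifying the trace maps $f_*\omega_Y \to \omega_C$ with the duals of the pullbacks $f^*$ on $H^1(C,\sL^{-1})$ for the possibly singular (only normal) covers $Y$, together with the bookkeeping that replaces the entire directed family of covers by a single dominating $f_0$ so that ``$f^*\ell_x \ne 0$ for all $f$'' upgrades to ``$\ell_x \notin \sum_f \Ker f^*$.'' Once that dictionary is in place the degree counts are immediate and drive everything; the thresholds $d \ge 2$ and $d \ge 3$ enter solely to make $\deg f^*\sL^{-1}(f^*Z)$ negative for $\deg Z = 1$ and $2$ respectively.
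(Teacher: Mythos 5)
Your proposal is correct, but it runs the whole argument on the Serre-dual side, so it is worth comparing with the paper's proof, to which it is ingredient-by-ingredient dual. The paper stays with sections: it fixes a single cover $f : D \to C$ computing $\tauCohomology^0$, writes the restriction sequences on $C$ and on $D$, and deduces that $\tauCohomology^0(C, \omega_C \tensor \sL)$ surjects onto $H^0(Q, \omega_Q \tensor \sL)$ from two facts: exactness of the bottom row, i.e. $H^1\big(D, \omega_D(-f^*Q) \tensor_{\O_D} f^*\sL\big) = 0$, and surjectivity of the trace $f_* \omega_{f^*Q} \to \omega_Q$. Your two key facts are exactly the Serre duals of these: your vanishing $H^0\big(Y, f^*\sL^{-1}(f^*x)\big) = 0$ is, by duality on the cover, the same degree count as the paper's $H^1$-vanishing, and your injectivity of $H^0(\O_Z) \to H^0(\O_{f^*Z})$ is the $k$-linear dual of surjectivity of the trace $f_*\omega_{f^*Z} \to \omega_Z$. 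The genuine divergence is in how this local ingredient is proved and organized: the paper must single out the double point $P = Q$ and prove surjectivity of $f_*\omega_{f^*2Q} \to \omega_{2Q}$ by splitting $\O_{C,Q} \to \O_{D, (f^{-1}Q)_{\reduced}}$ using regularity of $\O_{C,Q}$, whereas your faithful-flatness argument (finite dominant onto a smooth curve, hence flat and surjective, hence universally injective on quasi-coherent modules) treats every length-two subscheme $Z$ at once, reduced or not, with no case analysis --- that uniformity is what the dual formulation buys. What it costs is what you yourself flag: you must set up the Grothendieck-duality identification of the trace $f_*\omega_Y \to \omega_C$ with the dual of $f^* : H^1(C, \sL^{-1}) \to H^1(Y, f^*\sL^{-1})$, an identification the paper's direct diagram chase never needs. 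Both proofs share the same preliminary reduction --- directedness of the family of covers plus finite-dimensionality of $H^0(C,\omega_C \tensor \sL)$, so that a single $f_0$ computes $\tauCohomology^0$, equivalently $\sum_f \Ker f^* = \Ker f_0^*$ --- and both in fact prove the stronger statement that the restriction maps remain surjective on $\tauCohomology^0$, so your version could be substituted for the paper's without affecting anything downstream.
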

Notice that $\deg (\omega_C \tensor \sL) = 2g - 2 + d$.  Also recall that for any line bundle $N$ of degree $2g - 2 + d$, the linear system $|\omega_X \tensor_{\O_X} \sL |$ is base-point-free for $d \geq 2$ and induces an embedding for $d \geq 3$ \cite[Chapter IV, Section 3]{Hartshorne}.  Of course, one might hope for better bounds \cite[Chapter IV, Section 5]{Hartshorne}.
\begin{proof}
First suppose $\deg \sL = 2$.  Further, choose any point $Q \in C$.  It is sufficient to show that the map $\alpha$ in the diagram below:
\[
{\small
\xymatrix@C=15pt{
0 \ar[r] & H^0(C, \omega_C(-Q) \tensor \sL) \ar[r] & H^0(C, \omega_C \tensor \sL) \ar[r]^{\alpha} & H^0(Q, \omega_Q \tensor \sL) \ar[r] & 0 = H^1(C, \omega_C(-Q) \tensor \sL)
}
}
\]
does not send $\tauCohomology^0(C, \omega_C \tensor \sL)$ to zero.  In other words, this means that $\alpha|_{\tauCohomology^0(C, \omega_C \tensor \sL)}$ is surjective.  Here the final entry $H^1(C, \omega_C(-Q) \tensor \sL)$ is zero because $\deg \sL(-Q) > 0$.

Fix $f : D \to C$ to be any finite cover of $C$ from a normal scheme $D$ such that the image of $H^0(D, \omega_D \tensor_{\O_D} f^*\sL) \to H^0(C, \omega_C \tensor \sL)$ is equal to $\tauCohomology^0(C, \omega_C \tensor \sL)$.  Consider the following diagram:
\[
{\small
\xymatrix@C=15pt{
0 \ar[r] & H^0(C, \omega_C(-Q) \tensor \sL) \ar[r] & H^0(C, \omega_C \tensor \sL) \ar[r]^{\alpha} & H^0(Q, \omega_Q \tensor \sL) \ar[r] & 0  \\
0 \ar[r] & H^0(D, \omega_D(-f^*Q) \tensor_{\O_D} f^*\sL) \ar[u] \ar[r] & H^0(D, \omega_D \tensor_{\O_D} f^*\sL) \ar[u]^{\phi} \ar[r]^-{\beta} & H^0(f^* Q, \omega_{f^* Q} \tensor_{\O_D} f^* \sL) \ar[u]^{\psi} \ar[r] & 0
}
}
\]
noticing the $f^* Q$ need not be reduced or irreducible.  Here the top row is exact since $0 = H^1(C, \omega_C(-Q) \tensor \sL)$ and the bottom row is exact since $0 = H^1(D, \omega_D(-f^*Q) \tensor_{\O_D} f^*\sL)$.  However, $Q$ is a point, and thus non-singular and so the natural map $f_* \omega_{f^* Q} \to \omega_Q$ is surjective.  But this implies $\psi$ is surjective and so $\psi \circ \beta = \alpha \circ \phi$ is surjective as well.  It immediately follows that $\alpha|_{\Image(\phi)}$ is still surjective.  This completes the proof of the first statement.

For the second part, choose two points $P, Q \in C$ (possibly allowing $P = Q$).  We form a diagram as before:
\[
{\scriptsize
\xymatrix@C=10pt{
0 \ar[r] & H^0(C, \omega_C(-P - Q) \tensor \sL) \ar[r] & H^0(C, \omega_C \tensor \sL) \ar[r]^{\alpha} & H^0(C, \omega_{P + Q} \tensor \sL) \ar[r] & 0  \\
0 \ar[r] & H^0(D, \omega_D(-f^*(P +Q) ) \tensor_{\O_D} f^*\sL) \ar[u] \ar[r] & H^0(D, \omega_D \tensor_{\O_D} f^*\sL) \ar[u]^{\phi} \ar[r]^-{\beta} & H^0(f^* (P+Q), \omega_{f^* (P+Q)} \tensor_{\O_D} f^* \sL) \ar[u]^{\psi} \ar[r] & 0
}
}
\]
As before, the top row is exact since $0 = H^1(C, \omega_C(-P-Q) \tensor \sL)$ and the bottom row is exact since $0 = H^1(D, \omega_D(-f^*(P+Q)) \tensor_{\O_D} f^*\sL)$.
Again, we want to prove that $\alpha|_{\Image(\phi)}$ is surjective.
If $P \neq Q$, then the same argument as before implies the desired surjectivity.  Now suppose that $P = Q$.  It is sufficient to prove that $f_* \omega_{f^* 2Q} \to \omega_{2Q}$ is surjective.  Of course, there are covers of $\omega_{2Q}$ where that map is not surjective (for example, the one corresponding to the reduced scheme $Q$).  Consider the map $\O_{C, Q} \to \O_{D, (f^{-1}Q)_{\reduced}}$ of semi-local rings.  Since $\O_{C, Q}$ is regular, this map is split.  It follows that $\O_{2Q} \to \O_{2f^*Q}$ is also split.  The desired surjectivity now immediately follows.
\end{proof}


\subsection{Global generation in higher dimensions}

Suppose that $\sL$ is a globally generated ample line bundle on a smooth (or even $F$-injective) variety $X$ of dimension $d$.  In \cite{SmithFujitasFreeness,HaraACharacteristicPAnalogOfMultiplierIdealsAndApplications, KeelerFujita}, it was shown that $|\omega_X \tensor \sL^{d + 1}|$ is base-point-free, respectively that $|\omega_X \tensor \sL^{d + 2}|$ induces an embedding of $X$ into projective space.  Further refinements are also possible (replacing $\sL^{d + 1}$ by $\sL^{d} \tensor \sM$ for some ample line bundle $\sM$ \cite{KeelerFujita}).  However, those previous proofs actually showed something more, they in fact showed that the linear system associated to $\FCohomology^0(X, \omega_X \tensor \sL^{d+1})$ was base-point-free and that $\FCohomology^0(X, \omega_X \tensor \sL^{d+2})$ induced an embedding into projective space.

In order to motivate our later work, let us briefly prove this result since the proof is quite straightforward.

\begin{theorem}  \cite{SmithFujitasFreeness,HaraACharacteristicPAnalogOfMultiplierIdealsAndApplications, KeelerFujita}
\label{thm.KeelerThm}
Suppose that $X$ is a projective $F$-injective variety and that $\sL$ is any globally generated ample line-bundle and $\sM$ is any other ample line bundle.  Then
\begin{itemize}
\item[(1)]  $\omega_X \tensor \sL^{d} \tensor \sM$ is globally generated by $\FCohomology^0(X, \omega_X \tensor \sL^{d} \tensor \sM)$.
\item[(2)]  If $X$ is smooth, then $\FCohomology^0(X, \omega_X \tensor \sL^{d+1} \tensor \sM)$ induces an embedding of $X$ into projective space.
\end{itemize}
\end{theorem}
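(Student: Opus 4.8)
The plan is to realize the stable space $\FCohomology^0$ as the image of a single Frobenius push-forward and to force global generation of that push-forward via Castelnuovo--Mumford regularity (Theorem~\ref{thm.Regularity}), the point being that the affineness of $F^e$ turns the required vanishings into ordinary Serre vanishing, so no Kodaira-type statement is needed. For (1), set $\sN = \sL^{d}\tensor\sM$ and $\sB_e := F^e_*(\omega_X \tensor \sN^{p^e})$; by the definition of $\FCohomology^0$ and the fact that the images stabilize, $\FCohomology^0(X,\omega_X\tensor\sN)$ is the image of the trace $H^0(X,\sB_e)\to H^0(X,\omega_X\tensor\sN)$ for $e\gg0$. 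First I would show $\sB_e$ is globally generated for $e\gg0$: by the projection formula and affineness of $F^e$,
\[
H^i(X,\sB_e\tensor\sL^{-i}) = H^i\!\left(X,\omega_X\tensor(\sL^{d-i}\tensor\sM)^{p^e}\right),
\]
and for $1\le i\le d$ the bundle $\sL^{d-i}\tensor\sM$ is ample (it is $\sM$ when $i=d$), so this vanishes for $e\gg0$ by Serre vanishing, while it vanishes for $i>d$ by dimension. Thus $\sB_e$ satisfies the hypothesis of Theorem~\ref{thm.Regularity} with respect to $\sL$ and is globally generated.

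Since $X$ is $F$-injective, the trace $F^e_*\omega_X\to\omega_X$ (the bottom-cohomology component of $\Phi^e$ on the dualizing complex) is surjective, and twisting by the locally free $\sN$ keeps $\sB_e\to\omega_X\tensor\sN$ surjective. Composing the evaluation surjection $H^0(X,\sB_e)\tensor\O_X\twoheadrightarrow\sB_e$ with the trace, the composite $H^0(X,\sB_e)\tensor\O_X\to\omega_X\tensor\sN$ is surjective; as it factors through $\FCohomology^0(X,\omega_X\tensor\sN)\tensor\O_X$, this says exactly that $\omega_X\tensor\sL^{d}\tensor\sM$ is globally generated by $\FCohomology^0$, proving (1).

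For (2), base-point-freeness of $\FCohomology^0(X,\omega_X\tensor\sL^{d+1}\tensor\sM)$ follows from (1) with $\sM$ replaced by $\sL\tensor\sM$, so it remains to separate points and tangent vectors, i.e.\ to show that for every length-two subscheme $Z\subseteq X$ the map $\FCohomology^0(X,\omega_X\tensor\sN)\to H^0(Z,(\omega_X\tensor\sN)|_Z)$ is surjective, where now $\sN=\sL^{d+1}\tensor\sM$. Here smoothness enters decisively. Because $X$ is regular, $F^e$ is flat, so $\I_Z^{[p^e]}\cong F^{e*}\I_Z$ and the projection formula gives a clean identification
\[
F^e_*\!\left(\I_Z^{[p^e]}\tensor\omega_X\tensor\sN^{p^e}\right)\cong \I_Z\tensor\sB_e .
\]
Moreover the trace is compatible with Frobenius powers, $\phi(F^e_*(\I_Z^{[p^e]}\cdot m))=\I_Z\cdot\phi(F^e_* m)$, so it carries $\I_Z\tensor\sB_e$ into $\I_Z\tensor(\omega_X\tensor\sN)$ and hence descends to a surjection $\sB_e|_Z\twoheadrightarrow(\omega_X\tensor\sN)|_Z$; unlike the finite covers in the curve case of Theorem~\ref{thm.TauCohomologyForCurves}, no separate splitting argument is needed for this step. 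Chasing the sequence $0\to\I_Z\tensor\sB_e\to\sB_e\to\sB_e|_Z\to0$ and using that $H^1(X,\sB_e)=0$ for $e\gg0$ by Serre vanishing, the required surjectivity reduces to the single vanishing $H^1(X,\I_Z\tensor\sB_e)=0$.

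The main obstacle is precisely this last vanishing: it is the assertion that the vector bundle $\sB_e$ separates the fixed length-two scheme $Z$. It does not follow from naive Serre vanishing, because $\I_Z\tensor\sB_e = F^e_*(\I_Z^{[p^e]}\tensor\omega_X\tensor\sN^{p^e})$ involves the Frobenius thickening $Z^{[p^e]}=V(\I_Z^{[p^e]})$, whose length grows like $p^{ed}$, competing against the positivity of $\sN^{p^e}$, which grows like $p^e$. The plan is to win this competition using that the $\sL$-regularity of $\sB_e$ drops as $e\to\infty$ (the extra power $\sL^{d+1}$ rather than $\sL^{d}$ supplies the needed margin), feeding a Castelnuovo--Mumford/Koszul computation built from the globally generated $\sL$; the delicate point, and where I expect the real work to lie, is matching the growth of $Z^{[p^e]}$ against this margin. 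An alternative, closer to the boundary-divisor results proved later in the paper, is to impose on $Z$ a $\bQ$-divisor of multiplicity $\ge d$ at its support and invoke Lemma~\ref{lem.MultiplicityImpliesTestContainment} to force the relevant submodule into $\I_Z$, thereby deducing separation from the global-generation mechanism of part~(1). Once $H^1(X,\I_Z\tensor\sB_e)=0$ is secured, the diagram chase produces the surjection from $\FCohomology^0$, and the usual criterion yields the embedding.
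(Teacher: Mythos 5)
Your part (1) is correct and is essentially the paper's own argument: the projection formula and the affineness of $F^e$ convert the regularity hypotheses into Serre vanishing, so $\sB_e = F^e_*(\omega_X\tensor\sN^{p^e})$ is $0$-regular with respect to $\sL$ and hence globally generated by Theorem \ref{thm.Regularity}; $F$-injectivity makes the trace $\sB_e\to\omega_X\tensor\sN$ surjective, and the image of $H^0(X,\sB_e)$ is $\FCohomology^0(X,\omega_X\tensor\sN)$ for $e\gg0$.

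Part (2), however, contains a genuine gap, and it is exactly the one you flag yourself. Your reduction (flatness of Frobenius, $\I_Z\tensor\sB_e\cong F^e_*(\I_Z^{[p^e]}\tensor\omega_X\tensor\sN^{p^e})$, compatibility of the trace with $\I_Z^{[p^e]}$) is fine as far as it goes, but it terminates in the unproven vanishing $H^1(X,\I_Z\tensor\sB_e)=0$, for which you offer only a plan (``win the competition'' between the length $\sim p^{ed}$ of $Z^{[p^e]}$ and the positivity of $\sN^{p^e}$) and an uncarried-out alternative via Lemma \ref{lem.MultiplicityImpliesTestContainment}; an argument that defers ``the real work'' is not a proof. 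The paper sidesteps this difficulty entirely by never pulling the ideal sheaf through the Frobenius pushforward: for each point $y$ it cites Keeler's result \cite{KeelerFujita} that the sheaf $\bm_y\tensor(F^e_*\omega_X)\tensor\sL^{d+1}\tensor\sM$ --- ideal sheaf on the \emph{outside} --- is globally generated for $e\gg0$. That statement is a characteristic-free Castelnuovo--Mumford assertion (essentially \cite[Example 1.8.22]{LazarsfeldPositivity2} applied to the $0$-regular bundle $F^e_*\omega_X\tensor\sL^{d}\tensor\sM$), so no growth of $\I_Z^{[p^e]}$ ever enters. Then right-exactness of tensor and $F$-injectivity give a surjection $\bm_y\tensor(F^e_*\omega_X)\tensor\sL^{d+1}\tensor\sM \to \bm_y\tensor\omega_X\tensor\sL^{d+1}\tensor\sM$, and a diagram chase shows that the sections of $\FCohomology^0$ vanishing at $y$ globally generate $\bm_y\tensor\omega_X\tensor\sL^{d+1}\tensor\sM$; evaluating at points $z\neq y$ separates points, and surjecting onto $(\bm_y/\bm_y^2)\tensor\omega_X\tensor\sL^{d+1}\tensor\sM$ separates tangent vectors. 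The structural difference is decisive: the paper only needs a \emph{global generation} statement for the ideal-twisted sheaf, which follows from regularity of $F^e_*\omega_X\tensor\sL^d\tensor\sM$ alone, whereas your route demands an $H^1$ \emph{vanishing} for a sheaf built from $\I_Z^{[p^e]}$, a strictly harder claim that you do not establish. To complete your proof, replace the $\I_Z^{[p^e]}$ computation by Keeler's formulation.
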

\begin{proof} [Proof taken from \cite{KeelerFujita}]
Choose $e \gg 0$ and notice that $H^i(X, \omega_X \tensor \sM^{p^e} \tensor \sL^{p^e (d - i)}  ) = 0$ for all $i > 0$ by Serre vanishing.  Therefore, $(F^e_* \omega_X) \tensor \sM \tensor \sL^{d}$ is globally generated \emph{as an $\O_X$-module} by Theorem \ref{thm.Regularity}.  However, $(F^e_* \omega_X) \tensor \sM \tensor \sL^d$ surjects onto $\omega_X \tensor \sM \tensor \sL^d$ by the assumption on the singularities of $X$.  In particular, $\omega_X \tensor \sM \tensor \sL^d$ is globally generated by the linear system associated to
\[
\Image \left( H^0(X, (F^e_* \omega_X)\tensor \sM \tensor \sL^d) \to H^0(X, \omega_X \tensor \sM \tensor \sL^d) \right)
\]
This subspace equals $\FCohomology^0(X, \omega_X \tensor \sL^{d} \tensor \sM)$ since $e \gg 0$.

For the second half, mimicking the proof of \cite{KeelerFujita}, \cf \cite[Example 1.8.22]{LazarsfeldPositivity2}, we fix a point $y \in X$ and choose $e \gg 0$ such that
\[
 \bm_y \tensor (F^e_* \omega_X) \tensor \sL^{d+ 1} \tensor \sM
\]
is globally generated (this was shown in \cite{KeelerFujita}).  Consider the following diagram:
\[
 \xymatrix{
H^0(X, \bm_y \tensor (F^e_* \omega_X) \tensor \sL^{d+ 1} \tensor \sM) \ar[d]_{\gamma} \ar[r]^-{\alpha} & H^0(X, (F^e_* \omega_X) \tensor \sL^{d+ 1} \tensor \sM) \ar[d]^{\delta} \\
H^0(X, \bm_y \tensor \omega_X \tensor \sL^{d+ 1} \tensor \sM) \ar[r]_-{\beta} & H^0(X, \omega_X \tensor \sL^{d+ 1} \tensor \sM)
}
\]
It immediately follows that $\beta^{-1}(S^0\big(X, \omega_X \tensor \sL^{d+1} \tensor \sL)\big)$ globally generates $\bm_y \tensor \omega_X \tensor \sL^{d+ 1} \tensor \sM$.  Note this holds for each point $y \in X$ so that $S^0\big(X, \omega_X \tensor \sL^{d+1} \tensor \sL)$ clearly separates points.  It also separates tangent vectors since $\beta^{-1}(S^0\big(X, \omega_X \tensor \sL^{d+1} \tensor \sL)\big)$ globally generates $\bm_y \tensor \omega_X \tensor \sL^{d+ 1} \tensor \sM$ and thus its image also globally generates $(\bm_y / \bm_y^2) \tensor \omega_X \tensor \sL^{d+ 1} \tensor \sM$, \cf \cite[Page 21]{AmpleSubvarietiesOfAlgebraicVars}.
\end{proof}

\section{$\bQ$-divisors and global generation}

\begin{definition}
\label{def.QDivisorLinearSystem}
Suppose that $X$ is a normal proper variety over an algebraically closed field of characteristic $p > 0$.  Further suppose $\Delta \geq 0$ is a $\bQ$-divisor such that $K_X + \Delta$ is $\bQ$-Cartier with index not divisible by $p$.  Finally suppose that $M$ is any Cartier divisor.

Consider the map $\phi_{\Delta}^e : F^e_* \sL_{e, \Delta} \to \O_X$ as in Section \ref{sec.Preliminaries}.  Notice that $\phi_{\Delta}$ restricts to surjective maps:
    \[\begin{split}
        F^e_* \left( \sigma(X, \Delta) \tensor \sL_{e, \Delta} \right) \to \sigma(X, \Delta) \\
        F^e_* \left( \tau(X, \Delta) \tensor \sL_{e, \Delta} \right) \to \tau(X, \Delta)
    \end{split}
    \]
We then define $\FCohomology^0(X, \sigma(X, \Delta) \tensor \O_X(M) )$ as
\[
\begin{array}{rcl}
:= & \bigcap_{n \geq 0} & \Image \Big( H^0\big(X,  F^{ne}_* \sigma(X, \Delta) \tensor \sL_{ne, \Delta}(p^{ne} M)\big) \\
& & \to  H^0\big(X, \sigma(X, \Delta) \tensor \O_X(M)\big) \Big)\\
= & \bigcap_{n \geq 0} & \Image \Big( H^0\big(X,  F^{ne}_* \sL_{ne, \Delta}(p^{ne} M)\big) \\
& & \to H^0\big(X, \O_X(M)\big) \Big)\\
\subseteq & H^0(X, \O_X(M)).
\end{array}
\]
Likewise, we define $\FCohomology^0(X, \tau(X, \Delta) \tensor \O_X(M) )$ as
\[
\begin{array}{rcl}
:= & \bigcap_{n \geq 0} & \Image \Big( H^0\big(X,  F^{ne}_* \tau(X, \Delta) \tensor \sL_{ne, \Delta}(p^{ne} M)\big)\\
& &  \to  H^0\big(X, \tau(X, \Delta) \tensor \O_X(M)\big) \Big)\\
\subseteq & H^0(X, \O_X(M)).
\end{array}
\]
It is straightforward to see that these objects are independent of the choice of $e$.
\end{definition}

Observe the following:

\begin{lemma}
\label{lem.BasicPropertiesOfS0}
With notation as in Definition \ref{def.QDivisorLinearSystem}, it follows that
\[
\FCohomology^0\left(X, \tau(X, \Delta) \tensor \O_X(M) \right) \subseteq \FCohomology^0\left(X, \sigma(X, \Delta) \tensor \O_X(M) \right)
\]
Furthermore, if $\Delta_1 \geq \Delta_2$ both satisfy $K_X + \Delta_i$ being $\bQ$-Cartier with index not divisible by $p$, then
\[
\FCohomology^0\left(X, \tau(X, \Delta_1) \tensor \O_X(M) \right) \subseteq \FCohomology^0\left(X, \tau(X, \Delta_2) \tensor \O_X(M) \right).
\]
The same statement holds for $\sigma(X, \Delta_i)$.
\end{lemma}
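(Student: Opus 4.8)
The plan is to reduce both containments to a single elementary mechanism: whenever two coherent subsheaves of a common sheaf are nested and the relevant twisted trace maps are compatible, the images of the induced maps on $H^0$ are nested, and intersecting over the index $n$ preserves this. First I would fix one $e > 0$ for which $(p^e-1)(K_X+\Delta)$ (respectively $(p^e-1)(K_X+\Delta_i)$ for both $i$) is Cartier; this is harmless since $\FCohomology^0$ is independent of $e$ by Definition \ref{def.QDivisorLinearSystem}.

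For the inclusion $\FCohomology^0(X,\tau(X,\Delta)\tensor\O_X(M)) \subseteq \FCohomology^0(X,\sigma(X,\Delta)\tensor\O_X(M))$, the key input is that $\tau(X,\Delta)\subseteq\sigma(X,\Delta)$. This holds because $\tau(X,\Delta)$ is itself a fixed point of $\phi_\Delta$ — the equality $\phi_\Delta\big(F^e_*(\tau(X,\Delta)\cdot\sL_{e,\Delta})\big)=\tau(X,\Delta)$ recorded in Section \ref{sec.Preliminaries} — while $\sigma(X,\Delta)$ is by definition the \emph{largest} ideal with that fixed-point property. Granting this, for each $n$ the sheaf inclusion $\tau(X,\Delta)\tensor\sL_{ne,\Delta}(p^{ne}M)\hookrightarrow\sigma(X,\Delta)\tensor\sL_{ne,\Delta}(p^{ne}M)$ is compatible with the single map $\phi_\Delta$, which restricts to both ideals, giving a commutative square whose vertical arrows are the evaluation maps landing in $H^0(X,\tau(X,\Delta)\tensor\O_X(M))\subseteq H^0(X,\sigma(X,\Delta)\tensor\O_X(M))\subseteq H^0(X,\O_X(M))$. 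Thus the image of the $\tau$-row sits inside the image of the $\sigma$-row at each finite level, and intersecting over $n$ yields the claim.

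For the monotonicity assertion with $\Delta_1\ge\Delta_2$, I would combine two facts: the standard anti-monotonicity of test ideals $\tau(X,\Delta_1)\subseteq\tau(X,\Delta_2)$ \cite{SchwedeTuckerTestIdealSurvey}, and the identification
\[
\sL_{ne,\Delta_1}=\sL_{ne,\Delta_2}\big(-(p^{ne}-1)(\Delta_1-\Delta_2)\big),
\]
which exhibits $\sL_{ne,\Delta_1}$ as a subsheaf of $\sL_{ne,\Delta_2}$ since $(p^{ne}-1)(\Delta_1-\Delta_2)$ is an effective Cartier divisor for our chosen $e$. Together these give $\tau(X,\Delta_1)\tensor\sL_{ne,\Delta_1}(p^{ne}M)\hookrightarrow\tau(X,\Delta_2)\tensor\sL_{ne,\Delta_2}(p^{ne}M)$, under which $\phi_{\Delta_1}$ is exactly the restriction of $\phi_{\Delta_2}$. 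As before, the resulting commutative squares force the $\Delta_1$-image inside the $\Delta_2$-image at each level, and intersecting over $n$ settles the $\tau$ case. For the $\sigma$ case I would instead invoke the second (line-bundle-only) description of $\FCohomology^0(X,\sigma(X,\Delta_i)\tensor\O_X(M))$ in Definition \ref{def.QDivisorLinearSystem}; there the ideal factor drops out, so I only need $\sL_{ne,\Delta_1}\subseteq\sL_{ne,\Delta_2}$ and the same compatibility of trace maps, and no separate monotonicity statement for $\sigma$ is required.

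The one point deserving real care — essentially the only obstacle — is this compatibility of the twisted trace maps: that under the canonical isomorphism $\sL_{e,\Delta_1}\cong\sL_{e,\Delta_2}\big(-(p^e-1)(\Delta_1-\Delta_2)\big)$ the map $\phi_{\Delta_1}$ coincides with the restriction of $\phi_{\Delta_2}$ to the corresponding subsheaf, and likewise that the fixed map $\phi_\Delta$ simultaneously restricts to $\tau(X,\Delta)$ and $\sigma(X,\Delta)$. Once this is pinned down, commutativity of every square is formal, and the remaining steps — functoriality of $H^0$ and of passing to images, and the fact that intersection preserves containment — are routine.
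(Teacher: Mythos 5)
Your proof is correct and takes essentially the same route as the paper's: the first containment follows from $\tau(X, \Delta) \subseteq \sigma(X, \Delta)$ (the test ideal being a fixed point of $\phi_{\Delta}$, hence contained in the largest such ideal), and the second from choosing a common $e$ and observing that $\phi_{\Delta_1}^e$ factors through $\phi_{\Delta_2}^e$ via the inclusion $\sL_{e,\Delta_1} \subseteq \sL_{e,\Delta_2}$. The paper's two-sentence proof is simply a terser statement of exactly these observations, with the anti-monotonicity of $\tau$ and the compatibility of the twisted trace maps left implicit.
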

\begin{proof}
The first statement is obvious since $\tau(X, \Delta) \subseteq \sigma(X, \Delta)$.  For the second statement, choose a large $e = e_1 = e_2$ and then notice that $\phi_{\Delta_1}^e$ factors through $\phi_{\Delta_2}^e$.  \end{proof}

\subsection{General global generation statements for $\FCohomology^0$}
\label{sec.GeneralGlobalGenerationForFCohomology}

We now state our first global generation statement, the proof strategy is the same as in Theorem \ref{thm.KeelerThm}, \cf \cite{KeelerFujita}.

\begin{theorem}
\label{thm.BaseGlobalGenerationForSigmaTau}
Suppose that $X$ is a $d$-dimensional variety and that $\Delta$ is a $\bQ$-divisor such that $\Gamma = K_X + \Delta$ is $\bQ$-Cartier with index not divisible by $p > 0$.  Further suppose that $L$ is a Cartier divisor such that $L - K_X - \Delta$ is ample.  Finally suppose that $M$ is a globally generated ample divisor.  Then
\[
\sigma(X, \Delta) \tensor \O_X(L + n M)
\]
is globally generated for $n \geq d$ by $\FCohomology^0(X, \sigma(X, \Delta) \tensor \O_X(L + n M) )$.  Furthermore, the same result holds for $\tau(X, \Delta)$ in place of $\sigma(X, \Delta)$.

In particular, if $(X, \Delta)$ is sharply $F$-pure, then the linear system associated to
\[
H^0(X, \O_X(L + dM))
\]
is base-point-free.
\end{theorem}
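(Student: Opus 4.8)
The plan is to rerun the Frobenius--trace plus Castelnuovo--Mumford argument of Theorem \ref{thm.KeelerThm}, with the trace map $\Phi^e$ replaced by the twisted map $\phi_\Delta\colon F^e_*\sL_{e,\Delta}\to\O_X$ attached to the pair. Fix $e$ large and divisible enough that $(p^e-1)(K_X+\Delta)$ is Cartier, so that $\phi_\Delta$ exists and, by Definition \ref{def.QDivisorLinearSystem}, restricts to a surjection $F^e_*\!\left(\sigma(X,\Delta)\tensor\sL_{e,\Delta}\right)\to\sigma(X,\Delta)$ (and likewise onto $\tau(X,\Delta)$). Twisting $\phi_\Delta$ by $\O_X(L+nM)$ and applying the projection formula (using $F^{e*}\O_X(L+nM)=\O_X(p^e(L+nM))$) produces a surjection
\[
F^e_*\!\left(\sigma(X,\Delta)\tensor\sL_{e,\Delta}\tensor\O_X(p^e(L+nM))\right)\longrightarrow \sigma(X,\Delta)\tensor\O_X(L+nM),
\]
whose induced map on $H^0$ has image stabilizing to $\FCohomology^0\!\left(X,\sigma(X,\Delta)\tensor\O_X(L+nM)\right)$ for $e\gg 0$. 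Thus it suffices to show the source sheaf is globally generated as an $\O_X$-module: its global generation together with the surjection forces the target to be globally generated by the image of those global sections, which is exactly $\FCohomology^0$.

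To globally generate the source I would invoke Mumford's regularity, Theorem \ref{thm.Regularity}, relative to the globally generated ample divisor $M$. Pulling $\O_X(nM)$ out through $F^e_*$ by the projection formula, the source equals $\sH_e\tensor\O_X(nM)$ with $\sH_e:=F^e_*\!\left(\sigma(X,\Delta)\tensor\sL_{e,\Delta}\tensor\O_X(p^eL)\right)$, so I must verify $H^i\!\left(X,\sH_e\tensor\O_X((n-i)M)\right)=0$ for every $i>0$. For $i>d=\dim X$ this is automatic, and for $1\le i\le d$ a further use of the projection formula together with the affineness of $F^e$ identifies the group with
\[
H^i\!\left(X,\ \sigma(X,\Delta)\tensor\O_X\!\big((K_X+\Delta)+p^eA_i\big)\right),\qquad A_i:=\big(L-K_X-\Delta\big)+(n-i)M.
\]
Since $L-K_X-\Delta$ is ample and $(n-i)M$ is nef whenever $n\ge d\ge i$, the $\bQ$-Cartier divisor $A_i$ is ample, and $(K_X+\Delta)+p^eA_i$ is an integral Cartier divisor by the choice of $e$.

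The heart of the argument, and the step I expect to be the main obstacle, is the vanishing of this last group for $e\gg 0$. The subtlety is that the twisting sheaf is \emph{not} a fixed sheaf tensored with rising powers of one ample line bundle: the Cartier divisor $(K_X+\Delta)+p^eA_i$ mixes the fixed, possibly non-nef term $K_X+\Delta$ with the growing ample term $p^eA_i$, and $A_i$ itself is only $\bQ$-Cartier. I would therefore use Fujita's vanishing theorem rather than plain Serre vanishing. Fix an ample Cartier divisor $H$ and let $m_0$ be the Fujita threshold for the fixed coherent sheaf $\sigma(X,\Delta)$ and $H$. For $e\gg 0$ the Cartier divisor $N_{e,i}:=(K_X+\Delta)+p^eA_i-m_0H$ is ample, since $p^eA_i$ dominates the fixed part; writing $(K_X+\Delta)+p^eA_i=m_0H+N_{e,i}$, Fujita's theorem kills $H^i$ for all nef $N_{e,i}$, and running over the finitely many $i$ with $1\le i\le d$ yields a single $e$ that works simultaneously. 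The statement for $\tau(X,\Delta)$ is verbatim the same, using the surjection onto $\tau(X,\Delta)$ and the fixed coherent sheaf $\tau(X,\Delta)$ in Fujita's theorem.

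Finally, the ``in particular'' is immediate. If $(X,\Delta)$ is sharply $F$-pure then $\sigma(X,\Delta)=\O_X$ by definition, so taking $n=d$ the line bundle $\O_X(L+dM)$ is globally generated; for a line bundle this is precisely the assertion that its complete linear system $H^0(X,\O_X(L+dM))$ is base-point-free.
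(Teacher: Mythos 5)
Your argument is correct and follows the paper's skeleton exactly: the trace surjection $F^e_*(\sigma(X,\Delta)\tensor\sL_{e,\Delta})\to\sigma(X,\Delta)$, twisting by $\O_X(L+nM)$, identifying the image on $H^0$ with $\FCohomology^0$ for large $e$, and then globally generating the pushforward via Theorem \ref{thm.Regularity}. The only genuine divergence is the vanishing step, and there your diagnosis of the ``main obstacle'' is not quite right: the twist $(K_X+\Delta)+p^eA_i$ \emph{can} be written as a fixed Cartier divisor plus growing multiples of a single fixed ample Cartier divisor. Writing $e=ne_0$ with $(p^{e_0}-1)(K_X+\Delta)$ Cartier, one has
\[
(K_X+\Delta)+p^{e}A_i \;=\; \bigl(L+(n-i)M\bigr)\;+\;\tfrac{p^{e}-1}{p^{e_0}-1}\,\bigl[(p^{e_0}-1)A_i\bigr],
\]
where $(p^{e_0}-1)A_i=(p^{e_0}-1)(L+(n-i)M)-(p^{e_0}-1)(K_X+\Delta)$ is an ample \emph{Cartier} divisor and $\tfrac{p^{e}-1}{p^{e_0}-1}$ is a positive integer tending to infinity. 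So the group in question is $H^i$ of the fixed coherent sheaf $\sigma(X,\Delta)\tensor\O_X(L+(n-i)M)$ twisted by increasing powers of a fixed ample line bundle, and plain Serre vanishing (applied to the finitely many $i\le d$) suffices; this regrouping is precisely what the paper does. Your alternative, invoking Fujita's vanishing theorem with the nef (indeed ample, for $e\gg 0$) Cartier perturbation $N_{e,i}$, is also valid, since Fujita vanishing holds in characteristic $p>0$ and your divisibility and Cartier-ness checks go through. What the paper's route buys is economy — only Serre vanishing is needed; what your route buys is robustness — you get uniformity in $e$ without having to notice the regrouping, and the same argument would survive in situations where the growing positive part is not a multiple of one fixed Cartier divisor.
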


\begin{proof}
Choose $e > 0$ such that $(p^e -1) (K_X + \Delta)$ is Cartier.

We start with the natural map $\phi_{\Delta} : F^{e}_* \sL_e \to \O_X$ where $\sL_e = \O_X((1 - p^e)(K_X + \Delta))$.   For each $n > 0$, we have the natural map $\phi_{\Delta}^n : F^{ne}_* \sL_{ne} \to \O_X$ where $\sL_{ne} = \O_X((1 - p^{ne})(K_X + \Delta))$.  We fix $n \gg 0$.  Notice that
\[
\phi_{\Delta}^n\big(F^{ne}_* (\sigma(X, \Delta) \cdot \O_X((1 - p^{ne})(K_X + \Delta))) \big) = \sigma(X, \Delta)
\]
Twisting by $\O_X(L + dM)$ gives us a surjective map
\[
F^{ne}_* \big( \sigma(X, \Delta) \cdot \O_X((1 - p^{ne})(K_X + \Delta) + p^{ne} L + p^{ne} dM) \big) \to \sigma(X, \Delta) \tensor  \O_X(L + dM).
\]
The left-side is equal to $F^{ne}_* \big( \sigma(X, \Delta) \tensor \O_X(L) \tensor \O_X((p^{ne} - 1)(L - K_X - \Delta) + p^{ne} dM) \big)$.
We will show that this sheaf is globally generated as an $\O_X$-module by using Theorem \ref{thm.Regularity}, Castelnuovo-Mumford regularity.
To see this, simply notice that
\[
\begin{array}{rl}
& H^i\Big(X, F^{ne}_* \big( \sigma(X, \Delta) \tensor \O_X(L) \tensor \O_X((p^{ne} - 1)(L - K_X - \Delta) + p^{ne} dM) \big) \tensor \O_X(-iM)\Big)\\
= & H^i\Big(X, F^{ne}_* \big( \sigma(X, \Delta) \tensor \O_X(L) \tensor \O_X((p^{ne} - 1)(L - K_X - \Delta) + p^{ne} (d - i)M) \big) \Big)\\
= & H^i\Big(X, F^{ne}_* \big( \sigma(X, \Delta) \tensor \O_X(L + (d-i)M) \tensor \O_X((p^{ne} - 1)(L - K_X - \Delta + (d - i)M)) \big) \Big)
\end{array}
\]
which vanishes by Serre vanishing for $n \gg 0$ since for any $i \leq d$, $L - K_X - \Delta + (d - i)M$ is ample.  It follows that
\[
F^{ne}_* \big( \sigma(X, \Delta) \tensor \O_X(L) \tensor \O_X((p^{ne} - 1)(L - K_X - \Delta) + p^{ne} dM) \big)
\]
is globally generated as an $\O_X$-module, as claimed.
Therefore, the quotient $\sigma(X, \Delta) \tensor \O_X(L + nM)$ is then also globally generated as an $\O_X$-module

For the statement involving $\tau$, replace $\sigma$ by $\tau$.  For the final statement, notice that $(X, \Delta)$ is sharply $F$-pure if and only if $\sigma(X, \Delta) = \O_X$.
\end{proof}

\begin{remark}
In fact, the same result also holds for the relative (adjoint-like) test ideals of \cite{TakagiPLTAdjoint,SchwedeFAdjunction} and more generally for many of the ideals considered in \cite{BlickleBoeckleCartierModulesFiniteness,BlickleTestIdealsViaAlgebras}.
\end{remark}

We state an easy but important corollary showing we can weaken the ampleness condition on $L - K_X - \Delta$ to big and nef, via a perturbation trick.

\begin{corollary}\cf \cite[Proposition 9.4.26]{LazarsfeldPositivity2}
\label{cor.FullGlobalGeneration}
Suppose that $X$ is a normal projective variety of dimension $d$ and that $M$ is a globally generated ample Cartier divisor.  Let $\Delta$ be an effective $\bQ$-divisor such that $K_X + \Delta$ is $\bQ$-Cartier and suppose that $L$ is a Cartier divisor such that $L - K_X - \Delta$ is big and nef.
Then
\[
\tau(X, \Delta) \tensor \O_X(L + nM)
\]
is globally generated for $n \geq d$.  In particular, if $(X, \Delta)$ is strongly $F$-regular, then $\O_X(L + nM)$ is globally generated.
\end{corollary}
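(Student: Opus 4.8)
\emph{Strategy.} The plan is to reduce everything to Theorem \ref{thm.BaseGlobalGenerationForSigmaTau}, which already yields the desired global generation for $n \geq d$ under the two extra hypotheses that $L - K_X - \Delta$ be \emph{ample} and that $K_X + \Delta$ be $\bQ$-Cartier of index prime to $p$. I would therefore replace $\Delta$ by a slightly larger effective $\bQ$-divisor $\Delta'$ satisfying both of these, chosen so that $\tau(X, \Delta') = \tau(X, \Delta)$; then the conclusion for $(X, \Delta')$ reads as the conclusion for $(X, \Delta)$ verbatim, since $L$, $M$, and $n$ are untouched.

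\emph{Gaining ampleness.} Write $N = L - K_X - \Delta$, which is big and nef. By Kodaira's lemma I would express $N \sim_{\bQ} A + E$ with $A$ an ample $\bQ$-divisor and $E \geq 0$ effective, and then for a small rational $t \in (0,1)$ set $\Delta_t = \Delta + tE$. A direct computation gives
\[
L - K_X - \Delta_t \;=\; N - tE \;\sim_{\bQ}\; (1-t)N + tA,
\]
which is ample because $(1-t)N$ is nef and $tA$ is ample. Since $F$-jumping numbers are discrete, the continuity of test ideals under a small perturbation of the boundary (the effective-$\bQ$-divisor analog of $\tau(X, \Delta + \varepsilon A) = \tau(X, \Delta)$) guarantees $\tau(X, \Delta_t) = \tau(X, \Delta)$ for all $t$ sufficiently small. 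Thus $\Delta_t$ repairs the positivity hypothesis while leaving the test ideal unchanged.

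\emph{Fixing the index and concluding.} It remains to arrange that $K_X + \Delta_t$ be $\bQ$-Cartier of index prime to $p$. I would achieve this by a further small effective perturbation $\Delta' = \Delta_t + \Theta$ whose coefficients are chosen with denominators prime to $p$ and whose divisor class corrects the $p$-primary part of the class of $K_X + \Delta_t$ in $\mathrm{Cl}(X)/\Pic(X)$. Taking $\Theta$ small keeps $\tau(X, \Delta') = \tau(X, \Delta)$ by the same discreteness, and keeps $L - K_X - \Delta'$ ample since ampleness is open. Theorem \ref{thm.BaseGlobalGenerationForSigmaTau} now applies to $(X, \Delta')$, so $\tau(X, \Delta') \tensor \O_X(L + nM) = \tau(X, \Delta) \tensor \O_X(L + nM)$ is globally generated for $n \geq d$. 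The final clause is immediate: if $(X, \Delta)$ is strongly $F$-regular then $\tau(X, \Delta) = \O_X$, so $\O_X(L + nM)$ itself is globally generated.

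\emph{Main obstacle.} I expect the positivity step to be routine --- it is exactly the Kodaira perturbation signalled by the ``big and nef'' hypothesis. The delicate point is the index step: one must produce a \emph{single} $\Theta$ that is small enough to preserve both the test ideal and the ampleness of $L - K_X - \Delta'$, yet carries precisely the right $p$-primary class to force the index prime to $p$. Reconciling this class-group correction --- which is a genuine torsion obstruction when $\mathrm{Cl}(X)/\Pic(X)$ has $p$-torsion --- with the smallness demanded by the continuity of $\tau$ is where the real work lies.
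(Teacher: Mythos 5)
Your first perturbation (Kodaira's lemma trading ``big and nef'' for ``ample'' at the cost of replacing $\Delta$ by $\Delta + \varepsilon E$, with $\tau$ unchanged for $\varepsilon$ small) is exactly the first half of the paper's argument and is fine. The genuine gap is the index step, which you do not actually carry out: you posit a small effective $\Theta$ with denominators prime to $p$ that ``corrects the $p$-primary part'' of the class of $K_X + \Delta_t$ in $\mathrm{Cl}(X)/\Pic(X)$, and then concede that reconciling this with the smallness needed for $\tau$ is ``where the real work lies.'' That unresolved step is the whole content of the corollary beyond Theorem \ref{thm.BaseGlobalGenerationForSigmaTau}, and the class-group framing points in the wrong direction: no torsion class needs to be corrected, and no such obstruction exists.

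The paper's resolution is to perturb in the direction of $K_X + \Delta + \varepsilon E$ \emph{itself}. Choose an effective Cartier divisor $D$ with $D + K_X$ effective, write $np^{e_0}(K_X + \Delta + \varepsilon E)$ Cartier with $p \nmid n$, and for $e \gg e_0$ set
\[
\Gamma := \frac{D + K_X + \Delta + \varepsilon E}{p^e - 1}.
\]
This $\Gamma$ is effective (each of $D + K_X$, $\Delta$, $\varepsilon E$ is effective); it is $\tfrac{1}{p^e-1}$ times a \emph{fixed} divisor, so for $e \gg 0$ it preserves both $\tau(X, \Delta + \varepsilon E + \Gamma) = \tau(X, \Delta)$ and the ampleness of $L - K_X - \Delta - \varepsilon E - \Gamma$; and it repairs the index by pure arithmetic:
\[
n(p^e - 1)(K_X + \Delta + \varepsilon E + \Gamma) \;=\; nD + np^e(K_X + \Delta + \varepsilon E),
\]
which is Cartier because $np^e(K_X + \Delta + \varepsilon E)$ is $p^{e-e_0}$ times the Cartier divisor $np^{e_0}(K_X + \Delta + \varepsilon E)$. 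Hence the index of $K_X + \Delta + \varepsilon E + \Gamma$ divides $n(p^e - 1)$, which is prime to $p$, and Theorem \ref{thm.BaseGlobalGenerationForSigmaTau} applies to give the statement (with the strongly $F$-regular case following since then $\tau(X,\Delta) = \O_X$). The reason this succeeds where your $\Theta$ stalls is that the perturbation is not asked to lie in any prescribed class: by building the fractional multiple $\tfrac{1}{p^e-1}(K_X + \Delta + \varepsilon E)$ into $\Gamma$, the sum becomes $\tfrac{p^e}{p^e-1}(K_X + \Delta + \varepsilon E) + \tfrac{1}{p^e-1}D$, so clearing the prime-to-$p$ denominator $n(p^e-1)$ lands exactly on a $p^e$-th multiple of $K_X + \Delta + \varepsilon E$, where the $p$-power part of the old index is harmless.
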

\begin{proof}
Choose an effective Cartier divisor $D$ such that $D + K_X$ is effective.  Choose another effective Cartier divisor $E$ such that $L - K_X - \Delta - \varepsilon E$ is ample for all $1 \gg \varepsilon > 0$.  Now fix a rational $\varepsilon > 0$ such that $L - K_X - \Delta - \varepsilon E$ is ample and such that $\tau(X, \Delta + \varepsilon E) = \tau(X, \Delta)$.
Additionally suppose that $np^{e_0}(K_X + \Delta + \varepsilon E)$ is Cartier for some integer $n$ with $p\not{|} \,\,\, n$ and some integer $e_0 \geq 0$.  Choose $e \gg e_0$ and consider the effective $\bQ$-Cartier $\bQ$-divisor
\[
\Gamma := {D + K_X + \Delta + \varepsilon E \over (p^e - 1)}.
\]
Since $e \gg 0$, we know that $\tau(X, \Delta + \varepsilon E + \Gamma) = \tau(X, \Delta + \varepsilon E) = \tau(X, \Delta)$.  Additionally, $L - K_X - \Delta - \varepsilon E - \Gamma$ is also ample for the same reason.  On the other hand,
\[
\begin{array}{cl}
& n(p^e - 1)(K_X + \Delta + \varepsilon E + \Gamma) \\
 = & n(p^e - 1)(K_X + \Delta + \varepsilon E) + n(D + K_X + \Delta + \varepsilon E) \\
= & nD + np^e(K_X + \Delta + \varepsilon E)
\end{array}
\]
which is Cartier.  Thus the index of $K_X + \Delta + \varepsilon E + \Gamma$ is not divisible by $p$ and the result follows immediately from Theorem \ref{thm.BaseGlobalGenerationForSigmaTau}.
\end{proof}

\begin{remark}
We cannot use the same technique for $\sigma(X, \Delta)$ in place of $\tau(X, \Delta)$ in Corollary \ref{cor.FullGlobalGeneration}.  In particular, we cannot guarantee that $\sigma(X, \Delta + \varepsilon E) = \sigma(X, \Delta)$ no matter how small $\varepsilon > 0$ is.
\end{remark}

\begin{remark}
\label{rem.S0ForBigAndSemiample}
If in the previous corollary, we assume that $K_X + \Delta$ is $\bQ$-Cartier with index not divisible by $p > 0$, then we need not introduce the divisor $\Gamma$ at all.  Now observe that
\[
\FCohomology^0\left(X, \tau(X, \Delta + \varepsilon E) \tensor \O_X(L + nM)\right) \subseteq \FCohomology^0\left(X, \tau(X, \Delta) \tensor \O_X(L + nM)\right)
\]
and so it follows that $\FCohomology^0\left(X, \tau(X, \Delta) \tensor \O_X(L + nM)\right)$ globally generates $\tau(X, \Delta) \tensor \O_X(L + nM)$.  Likewise, even without the assumption on the index of $K_X + \Delta$, for any definition of $\FCohomology^0\left(X, \tau(X, \Delta) \tensor \O_X(L + nM)\right)$ satisfying Lemma \ref{lem.BasicPropertiesOfS0}, we obtain the global generation by the same argument.
\end{remark}

We now obtain a result which directly generalizes the global generation results of \cite{KeelerFujita} and \cite{SmithFujitasFreeness}.

\begin{corollary}
Suppose that $X$ is an $F$-rational projective $n$-dimensional variety, $M$ is a globally generated ample Cartier divisor and $N$ is any big and nef Cartier divisor.  Then $\omega_X(nM + N)$ is globally generated.
\end{corollary}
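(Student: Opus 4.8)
The plan is to run the Keeler-type argument of Theorem~\ref{thm.KeelerThm} directly on $\omega_X$, using the Grothendieck trace in place of the maps $\phi_\Delta$; the one genuinely new ingredient is a Serre-duality argument to cope with the fact that $N$ is only big and nef. Let me first note why we cannot simply quote Corollary~\ref{cor.FullGlobalGeneration}: that statement concerns the test ideal $\tau(X,\Delta) \subseteq \O_X$ and, in the structure-sheaf case, degenerates to the strongly $F$-regular hypothesis, whereas here the relevant object is the parameter test submodule $\tau(\omega_X) \subseteq \omega_X$ and the hypothesis is the strictly weaker $F$-rationality. So the argument must be re-run on $\omega_X$ itself.

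First I would record the two consequences of $F$-rationality that drive everything: $X$ is Cohen--Macaulay, and the iterated trace $\Phi^e \colon F^e_* \omega_X \to \omega_X$ is surjective for $e \gg 0$ (indeed $\tau(\omega_X) = \omega_X$ forces $\sigma(\omega_X) = \omega_X$, and $\sigma(\omega_X) = \Phi^e(F^e_*\omega_X)$ for $e \gg 0$). Tensoring this surjection with the line bundle $\O_X(nM + N)$ and applying the projection formula yields, for $e \gg 0$, a surjection of $\O_X$-modules
\[
F^e_*\big(\omega_X \tensor \O_X(p^e(nM + N))\big) \twoheadrightarrow \omega_X \tensor \O_X(nM + N).
\]
It therefore suffices to globally generate the source.

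To this end I would apply Castelnuovo--Mumford regularity (Theorem~\ref{thm.Regularity}) with respect to the globally generated ample bundle $\O_X(M)$. Since $F^e$ is finite and $\O_X(-iM)$ is a line bundle, the projection formula identifies
\[
H^i\big(X, F^e_*(\omega_X \tensor \O_X(p^e(nM+N))) \tensor \O_X(-iM)\big) \cong H^i\big(X, \omega_X \tensor \O_X(p^e((n-i)M + N))\big)
\]
for every $i > 0$. For $1 \le i \le n-1$ the divisor $(n-i)M + N$ is ample (ample plus nef), so this group vanishes for $e \gg 0$ by Serre vanishing. The only delicate term is $i = n$, where the bundle is $\O_X(p^e N)$ with $N$ merely big and nef and Kawamata--Viehweg-type vanishing is unavailable in characteristic $p$. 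This is the main obstacle, and I would resolve it by exploiting that $\omega_X$ is the dualizing sheaf of the Cohen--Macaulay variety $X$: Serre duality gives $H^n(X, \omega_X \tensor \O_X(p^e N))^\vee \cong H^0(X, \O_X(-p^e N))$, which vanishes because $-p^e N$ cannot be effective when $N$ is big. With all the groups for $i = 1, \dots, n$ vanishing, Theorem~\ref{thm.Regularity} shows the source is globally generated, hence so is its quotient $\omega_X(nM+N)$.

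I expect the only subtle point beyond bookkeeping to be the $i = n$ term just discussed, since it is exactly here that the characteristic-zero proof would invoke vanishing that fails in characteristic $p$. Choosing a single $e$ large enough to achieve trace-surjectivity together with the Serre vanishings for $1 \le i \le n-1$ is routine, and the $i = n$ vanishing is in any case independent of $e$.
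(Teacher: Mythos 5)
Your proof is essentially correct, but it takes a genuinely different route from the paper, and your opening claim --- that one cannot simply quote Corollary \ref{cor.FullGlobalGeneration} here --- is exactly backwards: the paper's proof consists of nothing but that quotation. The paper chooses an effective Cartier divisor $A$ with $A - K_X$ effective, sets $\Delta = A - K_X$ and $L = A + N$, so that $L - K_X - \Delta = N$ is big and nef, and computes $\tau(X, \Delta) = \tau(\omega_X, K_X + \Delta) = \tau(\omega_X, A) = \tau(\omega_X) \tensor \O_X(-A) = \omega_X(-A)$ using $F$-rationality; Corollary \ref{cor.FullGlobalGeneration} then says precisely that $\tau(X,\Delta) \tensor \O_X(L + nM) = \omega_X(nM + N)$ is globally generated. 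So the test-ideal statement does not ``degenerate to strong $F$-regularity'': the substitution $\Delta = A - K_X$ converts it into a statement about the parameter test submodule, and the big-and-nef hypothesis is absorbed by the perturbation trick already built into Corollary \ref{cor.FullGlobalGeneration}. Your proof instead re-runs the Keeler-type regularity argument directly on $\omega_X$: trace surjectivity from $\tau(\omega_X) = \omega_X$ (your derivation via $\tau(\omega_X) \subseteq \sigma(\omega_X)$ is correct), Serre vanishing for $1 \leq i \leq n-1$, and duality plus bigness of $N$ for $i = n$ (the terms $i > n$ vanish for dimension reasons, which should be said when invoking Theorem \ref{thm.Regularity}). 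This is valid and self-contained, and it isolates exactly where ``big and nef'' rather than ``ample'' enters. One simplification: you do not actually need Cohen--Macaulayness, which is the one ingredient that would require care to extract from the paper's definition of $F$-rationality as $\tau(\omega_X) = \omega_X$; the duality you use, $H^n(X, \omega_X \tensor \sL)^\vee \cong H^0(X, \sL^{-1})$, holds in the top degree on any normal projective $n$-fold by Grothendieck duality, since $\omega_X = \myH^{-n}(\omega_X^{\mydot})$ and $\sHom(\omega_X, \omega_X) \cong \O_X$. In summary: your route buys independence from the pair/perturbation machinery at the cost of length, while the paper's route buys brevity and illustrates how statements about $\tau(\omega_X)$ follow formally from statements about $\tau(X,\Delta)$.
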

\begin{proof}
Fix a canonical divisor $K_X$.
Choose any effective Cartier divisor $A$ such that $A - K_X$ is effective.  Set $\Delta = A - K_X$ and set $L = A + N$.  Then
\[
L - K_X - \Delta = A + N - K_X - A + K_X = N
\]
is big and nef.  It follows that $\tau(X, \Delta) = \tau(\omega_X, K_X + \Delta) = \tau(\omega_X, A) = \tau(\omega_X) \tensor \O_X(-A) = \omega_X(-A)$ where the last equality comes because $X$ is $F$-rational. Thus $\tau(X, \Delta) \tensor \O_X(L + nM) = \omega_X(-A + L + nM) = \omega_X(N + nM)$ is globally generated.
\end{proof}

The following application is a demonstration of the utility of the above results.  We give a new proof of a characteristic $p > 0$ analog of a special case of the main result of \cite{EsnaultViehwegSurUneMinoration}, \cf \cite{WaldschmidtNombresTranscendants}.  Variants of this problem were also studied by Bombieri, Skoda, Demailly, W\"ustholz, Chudnovsky and others.  See \cite{EsnaultViehwegSurUneMinoration} and \cite[Section 10.1]{LazarsfeldPositivity2} for a discussion of the history of this problem in characteristic zero.  Recently, B.~Harbourne observed that the characteristic $p > 0$ result below could be easily obtained via \cite{HochsterHunekeComparisonOfSymbolic}, the simple argument is briefly described in a special case in \cite[Slide 17]{HarbourneAsymptoticInvariantsSlides} and in the introduction to \cite{HarbourneHunekeSymbolicPowersHighly}.

\begin{theorem} [\cite{HarbourneAsymptoticInvariantsSlides}, \cf \cite{EsnaultViehwegSurUneMinoration}]
\label{thm.HypersurfaceDegreeApplication}
Fix a reduced closed subscheme $S \subseteq \bP^n_k = X$, where $k$ is an algebraically closed field of characteristic $p > 0$, such that each irreducible component $Z \subseteq  S$ has codimension $\leq e$ in $\bP^n_k$.
Suppose that $A \subseteq \bP^n_k$ is a hypersurface of degree $d$ such that $\mult_x A \geq l$ for all $x \in S$.  Then $S$ lies on a hypersurface of degree $\lfloor {de \over l} \rfloor$.
\end{theorem}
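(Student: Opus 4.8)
The plan is to mimic, via test ideals, the classical multiplier-ideal argument of \cite[Section 10.1]{LazarsfeldPositivity2}. Write $X = \bP^n_k$ and let $H$ be a hyperplane, so $K_X \sim -(n+1)H$. Consider the effective $\bQ$-divisor
\[
D = \frac{e}{l}\, A,
\]
of degree $de/l$, which satisfies $\mult_x D = \frac{e}{l}\mult_x A \geq \frac{e}{l}\cdot l = e$ for every $x \in S$. Note that $K_X + D$ is automatically $\bQ$-Cartier since $X$ is smooth, so $\tau(X, D)$ is defined.

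The first step is to show that $\tau(X, D)$ is contained in the ideal sheaf $I_S$ of $S$. Let $Z$ be an irreducible component of $S$, with generic point $\eta_Z$ and codimension $c = \codim Z \leq e$. Because $\mult_{\eta_Z} D \geq e \geq c$, Lemma \ref{lem.MultiplicityImpliesTestContainment} applies (with its codimension parameter equal to $c$) and yields $\tau(X, D) \subseteq \bq_{\eta_Z}$, the prime ideal sheaf of $Z$. Intersecting over all components and using that $S$ is reduced, so that $I_S = \bigcap_Z \bq_{\eta_Z}$, gives $\tau(X, D) \subseteq I_S$.

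The second step is to produce a nonzero section of $I_S \tensor \O_X(m)$ with $m = \lfloor de/l \rfloor$. I would apply Corollary \ref{cor.FullGlobalGeneration} with $\Delta = D$, the globally generated ample divisor $M = H$, multiplier equal to $\dim X = n$, and $L = (m-n)H$. Then $L + nM \sim mH$, while
\[
L - K_X - D \sim \left(m + 1 - \tfrac{de}{l}\right)H
\]
has positive degree, since $m = \lfloor de/l \rfloor > \frac{de}{l} - 1$; hence $L - K_X - D$ is ample, in particular big and nef. The corollary then shows $\tau(X, D) \tensor \O_X(m)$ is globally generated. As $\tau(X, D)$ is a nonzero ideal sheaf, this twist is nonzero and globally generated, whence $H^0\!\big(X, \tau(X, D) \tensor \O_X(m)\big) \neq 0$.

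Finally, twisting the inclusion $\tau(X, D) \subseteq I_S$ by the line bundle $\O_X(m)$ and taking $H^0$ gives an injection $H^0\!\big(X, \tau(X, D) \tensor \O_X(m)\big) \hookrightarrow H^0\!\big(X, I_S \tensor \O_X(m)\big)$, so a nonzero section of the former produces a hypersurface of degree $m = \lfloor de/l \rfloor$ through $S$. I expect the only delicate point to be the bookkeeping in the first two steps: namely, that a \emph{single} divisor $D$ must force the test ideal into $\bq_{\eta_Z}$ for all components simultaneously (which is exactly why the uniform bound $e$ is used rather than the individual codimensions), together with checking that the floor in $m$ is precisely compatible with the ampleness of $L - K_X - D$.
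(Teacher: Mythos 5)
Your proposal is correct and follows essentially the same route as the paper's own proof: the same divisor $D = {e \over l}A$, the same containment $\tau(X, D) \subseteq \mathcal{I}_S$ via Lemma \ref{lem.MultiplicityImpliesTestContainment}, and the same application of Corollary \ref{cor.FullGlobalGeneration} with $M = H$ and $L = (\lfloor {de \over l} \rfloor - n)H$, using $\lfloor {de \over l} \rfloor + 1 > {de \over l}$ to get ampleness of $L - K_X - D$ and then extracting a nonzero section of $\mathcal{I}_S \tensor \O_X(\lfloor {de \over l} \rfloor)$. The only difference is expository: you spell out the component-by-component application of the lemma at the generic points $\eta_Z$ (where the codimension hypothesis $\codim Z \leq e$ enters), a step the paper leaves implicit.
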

\begin{proof}
This proof is taken from \cite[Proposition 10.1.1, Example 10.1.4]{LazarsfeldPositivity2}, \cf \cite{EsnaultViehwegSurUneMinoration, EsnaultViehwegLecturesOnVanishing}.
Set $D = {e \over l}A$.  Notice that $\mult_x D \geq e$ for all $x \in S$.
It follows from Lemma \ref{lem.MultiplicityImpliesTestContainment} that $\tau(X, D) \subseteq \mathcal{I}_S$ where $\mathcal{I}_S$ is the ideal defining $S$.  Set $\delta = \lfloor {de \over l} \rfloor$.


Set $H$ to be the hyperplane divisor on $X$ and consider $m H - K_X - D \sim_{\bQ} mH  +(n+1)H - {de \over l} H$ and observe it is ample as long as $m > {de \over l}   - (n+1)$.  Since $m$ is an integer, this happens if and only if $m \geq \lfloor {de \over l}  \rfloor + 1 - (n+1) = \delta - n$.  Choose $L = (\delta - n)H$ and notice that then
\[
\tau(X, D) \tensor \O_X(L + n H) = \tau(X, D) \tensor \O_X(\delta)
\]
is globally generated.  But $\tau(X, D) \subseteq \mathcal{I}_S$ and so there exists a non-zero section
\[
s \in H^0(X, \tau(X, D) \tensor \O_X(\delta)) \subseteq H^0(X, \mathcal{I}_S \tensor \O_X(\delta)) \subseteq H^0(X, \O_X(\delta)).
\]
The zero locus of that section has the desired property.
\end{proof}

\begin{remark}
It would be interesting to try to prove the full results of \cite{EsnaultViehwegSurUneMinoration} in characteristic $p > 0$.  This does not seem to follow from \cite{HochsterHunekeComparisonOfSymbolic}.  Indeed perhaps it is possible to simply mimic Esnault and Viehweg's proof (especially in light of Section \ref{sec.FPureCenters} below).
\end{remark}

\section{$F$-pure centers}
\label{sec.FPureCenters}

Suppose that $(X, \Delta \geq 0)$ is a pair such that $K_X + \Delta$ is $\bQ$-Cartier with index not divisible by $p > 0$.  Choose $e > 0$ such that $(1 - p^e)(K_X + \Delta)$ is Cartier.  Set $\sL_{e, \Delta} =  \O_X( (1 - p^e)(K_X + \Delta))$ and consider  $\phi_{\Delta} : F^e_* \sL_{e, \Delta} \to \O_X$ to be a map corresponding to $\Delta$ as in Section \ref{sec.Preliminaries}.   Recall the following definition.

\begin{definition} \cite{SchwedeCentersOfFPurity, SchwedeFAdjunction}
\label{def.FPureCenter}
With $(X, \Delta)$ as above, a subvariety $Z \subseteq X$ is called an \emph{$F$-pure center of $(X, \Delta)$} if $(X, \Delta)$ is sharply $F$-pure at the generic point of $Z$ and additionally if
\[
\phi_{\Delta}\big(F^e_* ( \sI_Z \cdot \sL_{e,\Delta}) \big) \subseteq \sI_Z
\]
where $\sI_Z$ is the ideal sheaf defining $Z \subseteq X$.
Thus we induce a non-zero map:
\[
\phi^Z_{\Delta} : F^e_* \big((\sL_{e,\Delta})|_Z\big)  \to \O_Z.
\]
If $Z$ is normal, then $\phi^Z_{\Delta}$ corresponds to an effective $\bQ$-divisor $\Delta_Z$ on $Z$ such that $(K_X + \Delta)|_Z \sim_{\bQ} K_Z + \Delta_Z$.
\end{definition}

\begin{remark}
Even more, if $Z$ is a union of $F$-pure centers, then it is easy to see that we still have a map
\[
\phi^Z_{\Delta} : F^e_* \big( (\sL_{e,\Delta})|_Z\big) \to \O_Z.
\]
\end{remark}

We now demonstrate how $F$-pure centers can be used to lift sections and to prove that adjoint linear systems are base-point-free along certain loci.

\begin{proposition}
\label{prop.SurjectiveOfSigmas}
Fix $X$ a normal projective variety and suppose that $(X, \Delta \geq 0)$ is a pair such that $K_X + \Delta$ is $\bQ$-Cartier with index not divisible by $p > 0$.  Suppose that $Z \subseteq X$ is any union of $F$-pure centers of $(X, \Delta)$.  Finally, suppose that $M$ is a Cartier divisor such that $M - K_X - \Delta$ is ample.  Then there is a natural surjective map:
\[
\FCohomology^0\big(X, \sigma(X, \Delta) \tensor \O_X(M) ) \to \FCohomology^0\big(Z, \sigma(Z, \phi_Z^{\Delta}) \tensor \O_Z(M)\big).
\]
\end{proposition}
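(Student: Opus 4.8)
The plan is to compare the two systems at each finite Frobenius level through the restriction map $\O_X \to \O_Z$, and then to pass to the stable images. Since $Z$ is a union of $F$-pure centers, the map $\phi^Z_{\Delta}$ is by construction the map induced by $\phi_{\Delta}$ through the surjection $\O_X \twoheadrightarrow \O_Z$ (this is exactly the content of the remark following Definition~\ref{def.FPureCenter}). Recalling the second, ``line-bundle'' description of $\FCohomology^0$ from Definition~\ref{def.QDivisorLinearSystem} (valid because $\phi_{\Delta}(F^{ne}_*(\sigma(X,\Delta)\cdot \sL_{ne,\Delta})) = \sigma(X,\Delta)$, and analogously on the reduced scheme $Z$ for $\phi^Z_{\Delta}$), I would write
\[
\FCohomology^0\big(X, \sigma(X,\Delta) \tensor \O_X(M)\big) = \bigcap_{n \geq 0} \Image\Big( H^0\big(X, F^{ne}_* \sL_{ne,\Delta}(p^{ne}M)\big) \xrightarrow{\ \Phi^X_n\ } H^0\big(X, \O_X(M)\big)\Big),
\]
with $\Phi^X_n$ induced by the iterate $\phi^n_{\Delta}\colon F^{ne}_*\sL_{ne,\Delta} \to \O_X$, and the same for $Z$ with $\Phi^Z_n$. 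Compatibility of $\phi^Z_{\Delta}$ with $\phi_{\Delta}$ then makes the square
\[
\xymatrix@C=16pt{
H^0\big(X, F^{ne}_* \sL_{ne,\Delta}(p^{ne}M)\big) \ar[r]^-{\Phi^X_n} \ar[d]_{r_n} & H^0\big(X, \O_X(M)\big) \ar[d]^{r} \\
H^0\big(Z, F^{ne}_* (\sL_{ne,\Delta}|_Z)(p^{ne}M)\big) \ar[r]^-{\Phi^Z_n} & H^0\big(Z, \O_Z(M)\big)
}
\]
commute for every $n$, with vertical maps the restriction to $Z$.

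First I would check that $r$ carries $\FCohomology^0(X, \sigma(X,\Delta)\tensor\O_X(M))$ into $\FCohomology^0(Z, \sigma(Z,\phi^Z_{\Delta})\tensor\O_Z(M))$: if $s = \Phi^X_n(t)$, then commutativity gives $r(s) = \Phi^Z_n(r_n(t)) \in \Image(\Phi^Z_n)$, and intersecting over all $n$ places $r(s)$ in the stable image on $Z$. This defines the asserted map, so only surjectivity remains.

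The heart of the matter is to show that $r_n$ is surjective on global sections for $n \gg 0$. Because $F^{ne}$ is affine, $r_n$ is simply the restriction map $H^0(X, \sG_n) \to H^0(Z, \sG_n|_Z)$ for the line bundle $\sG_n = \sL_{ne,\Delta}(p^{ne}M) = \O_X\big(M + (p^{ne}-1)(M - K_X - \Delta)\big)$; note this is a genuine Cartier divisor because $(p^e-1)(K_X+\Delta)$ was chosen Cartier. Tensoring $0 \to \sI_Z \to \O_X \to \O_Z \to 0$ with $\sG_n$, surjectivity of $r_n$ follows once $H^1(X, \sI_Z \tensor \sG_n) = 0$. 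Since $M - K_X - \Delta$ is ample, the twist $(p^{ne}-1)(M - K_X - \Delta)$ becomes arbitrarily positive as $n \to \infty$, so this vanishing holds for $n \gg 0$ by Serre vanishing applied to the fixed sheaf $\sI_Z \tensor \O_X(M)$ (Fujita's vanishing theorem is the cleanest tool here, as the ample twists form only a sparse sequence of multiples rather than all powers of one line bundle). I expect this cohomology-vanishing step, together with confirming that Frobenius genuinely amplifies the positive part $M - K_X - \Delta$, to be the main obstacle.

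Finally I would assemble surjectivity. Because $\phi^{n+1}_{\Delta}$ factors through $\phi^n_{\Delta}$, the images $\Image(\Phi^X_n)$ form a decreasing chain in the finite-dimensional space $H^0(X,\O_X(M))$ and therefore stabilize; likewise on $Z$. Fix $n \gg 0$ so large that $\Image(\Phi^X_n) = \FCohomology^0(X,\dots)$, $\Image(\Phi^Z_n) = \FCohomology^0(Z,\dots)$, and $r_n$ is surjective. Given $s \in \FCohomology^0(Z, \sigma(Z,\phi^Z_{\Delta})\tensor\O_Z(M))$, write $s = \Phi^Z_n(\bar t)$, lift $\bar t = r_n(t)$, and set $\tilde s := \Phi^X_n(t)$. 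Then $\tilde s \in \Image(\Phi^X_n) = \FCohomology^0(X,\dots)$, while commutativity gives $r(\tilde s) = \Phi^Z_n(r_n(t)) = \Phi^Z_n(\bar t) = s$. Hence $r$ restricts to a surjection
\[
\FCohomology^0\big(X, \sigma(X,\Delta)\tensor\O_X(M)\big) \twoheadrightarrow \FCohomology^0\big(Z, \sigma(Z,\phi^Z_{\Delta})\tensor\O_Z(M)\big),
\]
as desired.
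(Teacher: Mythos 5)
Your proof is correct and follows essentially the same route as the paper's: both restrict the Frobenius-level sections to $Z$ via the sequence $0 \to \sI_Z \to \O_X \to \O_Z \to 0$ twisted by $\sL_{ne,\Delta}(p^{ne}M)$, kill the obstruction $H^1\big(X, \sI_Z \cdot \O_X(M + (p^{ne}-1)(M - K_X - \Delta))\big)$ by Serre vanishing (which suffices here, since these twists are integer multiples of the fixed ample bundle $\O_X((p^e-1)(M-K_X-\Delta))$, so Fujita vanishing is not needed), and then chase the resulting commutative square using that both stable images are achieved at a common large level. The paper's $\alpha, \beta, \gamma, \delta$ are precisely your $\Phi^X_n, \Phi^Z_n, r_n, r$.
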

\begin{proof}
Choose $e \gg 0$ such that $(1 - p^e)(K_X + \Delta)$ is Cartier and consider the diagram of short exact sequences:
\[
\xymatrix{
0 \ar[r] & F^e_* \left(\sI_Z \cdot \sL_{e, \Delta}\right) \ar[r] \ar[d] & F^e_* \sL_{e,\Delta} \ar[r] \ar[d] & F^e_* \left( \sL_{e,\Delta} \tensor \O_Z \right) \ar[d] \ar[r] & 0 \\
0 \ar[r] & \sI_Z \ar[r] & \O_X \ar[r] & \O_Z \ar[r] & 0
}
\]
Twisting by $M$ and taking cohomology, we obtain
\[
{\scriptsize
\xymatrix@C=10pt{
0 \ar[r] & H^0\big(X, F^e_* \left(\sI_Z \cdot \sL_{e, \Delta} \tensor \O_X(p^e M) \right)\big) \ar[r] \ar[d] & H^0\big(X, F^e_* (\sL_{e,\Delta} \tensor \O_X(p^e M)) \big) \ar[r]^-{\gamma} \ar[d]_{\alpha} & H^0\big(Z, F^e_* \left( \sL_{e,\Delta} \tensor \O_Z(p^eM) \right)\big) \ar[d]^{\beta} \ar[r] & 0 \\
0 \ar[r] & H^0\big(X, \sI_Z \cdot \O_X(M) \big) \ar[r] & H^0\big(X, \O_X(M)\big) \ar[r]_-{\delta} & H^0\big(Z, \O_Z(M)\big)
}
}
\]
where the top sequence is exact since
\[
H^1\big(X, F^e_* \left(\sI_Z \cdot \sL_{e, \Delta} \tensor \O_X(p^e M) \right)\big) = H^1\big(X, F^e_* \left(\sI_Z \cdot \O_X(p^e M - (p^e - 1)(K_X + \Delta) ) \right)\big)
\]
which vanishes for $e \gg 0$ by Serre vanishing.

Now, since $e \gg 0$, the image of $\alpha$ is $\FCohomology^0\big(X, \sigma(X, \Delta) \tensor \O_X(M) \big)$ and similarly the image of $\beta$ is $\FCohomology^0\big(Z, \sigma(Z, \phi_Z^{\Delta}) \tensor \O_Z(M)\big)$.  Choose $x = \beta(y) \in \FCohomology^0\big(Z, \sigma(Z, \phi_Z^{\Delta}) \tensor \O_Z(M)\big)$.  Fix $w \in H^0\big(X, F^e_* (\sL_{e,\Delta} \tensor \O_X(p^e M) ) \big)$ such that $\gamma(w) = y$.  Thus $\delta(\alpha(w)) = x$ and so the claimed map is surjective.
\end{proof}

\begin{remark}
If in the previous Proposition $Z$ is a finite union of closed points then it is easy to see that $\FCohomology^0\big(Z, \sigma(Z, \phi_Z^{\Delta}) \tensor \O_Z(M)\big) \cong H^0(Z, \O_Z)$.  It follows that $\FCohomology^0\big(X, \sigma(X, \Delta) \tensor \O_X(M) ) \subseteq H^0\big(X, \O_X(M)\big)$ separates the points of $Z$.
\end{remark}

Now we state a corollary.  The author expects that parts of this have been known to experts for several years:
\begin{corollary}
\label{cor.NoBasePointsAlongZ}
Suppose that $(X, \Delta)$ as in Proposition \ref{prop.SurjectiveOfSigmas}.  Suppose that $Z \subseteq X$ is a normal $F$-pure center of $(X, \Delta)$ and also suppose that $M$ is a Cartier divisor on $X$ such that $M - K_X - \Delta$ is ample.  Write $(K_X + \Delta)|_Z \sim_{\bQ} K_Z + \Delta_Z$ with $\Delta_Z$ as in Definition \ref{def.FPureCenter} and suppose that $(Z, \Delta_Z)$ is sharply $F$-pure.  Finally suppose that one of the following holds:
\begin{itemize}
\item[(i)]  $\O_Z(M)$ is equal to $\O_Z((\dim Z) A + B)$ for some globally generated ample Cartier divisor $A$ on $Z$ and some Cartier divisor $B$ such that $B - K_Z - \Delta_Z$ is ample, or
\item[(ii)]  $Z$ is a minimal $F$-pure center and $\O_Z(M)$ is equal to $\O_Z((\dim Z) A + B)$ for some globally generated ample Cartier divisor $A$ on $Z$ and some Cartier divisor $B$ such that $B - K_Z - \Delta_Z$ is big and nef, or
\item[(iii)]  $Z$ is a smooth curve and $M|_Z - K_Z - \Delta_Z \sim_{\bQ} (M - K_X - \Delta)|_Z$ has degree $>1$, or
\item[(iv)]  $Z$ is a closed point.
\end{itemize}
Then $|M|$ has no base points along $Z$.
\end{corollary}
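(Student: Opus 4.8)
The plan is to reduce the statement, in all four cases, to a single base-point-freeness assertion for a linear system living on $Z$ itself, and then to dispatch the four cases using results already established. First I would set up the reduction. Since $(Z, \Delta_Z)$ is sharply $F$-pure we have $\sigma(Z, \phi_Z^{\Delta}) = \O_Z$, so Proposition \ref{prop.SurjectiveOfSigmas} supplies a surjection
\[
\FCohomology^0\big(X, \sigma(X, \Delta) \tensor \O_X(M)\big) \to \FCohomology^0\big(Z, \O_Z(M)\big).
\]
Consequently, if a closed point $z \in Z$ is not a base point of the linear system cut out by $\FCohomology^0(Z, \O_Z(M))$ on $Z$, then any section $\bar s \in \FCohomology^0(Z, \O_Z(M))$ with $\bar s(z) \neq 0$ lifts to a section $s \in \FCohomology^0(X, \sigma(X,\Delta) \tensor \O_X(M)) \subseteq H^0(X, \O_X(M))$ with $s(z) \neq 0$. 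Thus it suffices to prove that the linear system associated to $\FCohomology^0(Z, \O_Z(M))$ is base-point-free on $Z$. Note also that because the index of $K_X + \Delta$ is prime to $p$, the same holds for $K_Z + \Delta_Z$ (the divisor $\Delta_Z$ is produced from the same $e$ in Definition \ref{def.FPureCenter}, so $(1 - p^e)(K_Z + \Delta_Z)$ is Cartier), and hence all the machinery of Section \ref{sec.GeneralGlobalGenerationForFCohomology} applies on $Z$.

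Next I would handle the four cases on $Z$. In case (i), writing $d_Z = \dim Z$ and applying Theorem \ref{thm.BaseGlobalGenerationForSigmaTau} to $(Z, \Delta_Z)$ with the globally generated ample divisor $A$, the Cartier divisor $B$ (for which $B - K_Z - \Delta_Z$ is ample), and $n = d_Z$, shows that $\sigma(Z, \Delta_Z) \tensor \O_Z((\dim Z)A + B) = \O_Z(M)$ is globally generated by $\FCohomology^0(Z, \O_Z(M))$; in particular the system is base-point-free. In case (ii), I would first invoke the fact that a minimal $F$-pure center is strongly $F$-regular, i.e.\ $\tau(Z, \Delta_Z) = \O_Z$ (the analog of a minimal log canonical center being klt, see \cite{SchwedeFAdjunction, SchwedeCentersOfFPurity}). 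This upgrade lets me replace \emph{ample} by \emph{big and nef} via the perturbation argument of Corollary \ref{cor.FullGlobalGeneration} together with Remark \ref{rem.S0ForBigAndSemiample}, yielding that $\FCohomology^0(Z, \tau(Z, \Delta_Z) \tensor \O_Z(M))$ globally generates $\O_Z(M)$; by the containment of Lemma \ref{lem.BasicPropertiesOfS0} the larger space $\FCohomology^0(Z, \O_Z(M))$ does as well. Case (iv), where $Z$ is a point, is immediate: $H^0(Z, \O_Z(M))$ is one-dimensional and the lifted section is nonvanishing, exactly as in the remark following Proposition \ref{prop.SurjectiveOfSigmas}.

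The remaining case (iii) is the one I expect to be the main obstacle, since neither Theorem \ref{thm.BaseGlobalGenerationForSigmaTau} (it would force an artificial decomposition $M|_Z = L' + A'$ with $A'$ globally generated ample on the curve) nor Theorem \ref{thm.TauCohomologyForCurves} (stated without a boundary) applies verbatim. Instead I would redo the curve argument of Theorem \ref{thm.TauCohomologyForCurves} in the presence of the boundary $\Delta_Z$ and with the Frobenius maps $\phi_{\Delta_Z}^n$ in place of arbitrary finite covers. Concretely, for each closed point $Q \in Z$ I would restrict the iterated trace map along the sequence $0 \to \sI_Q \to \O_Z \to \O_Q \to 0$, twisted appropriately by $\sL_{ne, \Delta_Z}(p^{ne}M)$, to form a commutative diagram as in Proposition \ref{prop.SurjectiveOfSigmas}. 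Here the hypothesis $\deg(M|_Z - K_Z - \Delta_Z) > 1$ guarantees, for $n \gg 0$, the Serre vanishing $H^1\big(Z, F^{ne}_*(\sI_Q \cdot \sL_{ne, \Delta_Z}(p^{ne}M))\big) = 0$ needed for right-exactness of the top row, while smoothness of $Z$ at $Q$ yields surjectivity of the trace restricted to $Q$. A diagram chase then shows $\FCohomology^0(Z, \O_Z(M))$ surjects onto $H^0(Q, \O_Z(M)|_Q)$, so $Q$ is not a base point.

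The delicate point in case (iii) is precisely the surjectivity of the restricted trace at $Q$: one must verify that the sharply $F$-pure structure of $(Z, \Delta_Z)$ prevents $\phi_{\Delta_Z}$ from degenerating at $Q$, and it is exactly here that the degree bound $>1$ (the curve analog of $\deg \sL \geq 2$ in Theorem \ref{thm.TauCohomologyForCurves}) enters. Everything else is a routine assembly of the reduction via Proposition \ref{prop.SurjectiveOfSigmas} with the appropriate global generation input for each case.
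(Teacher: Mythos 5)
Your reduction via Proposition \ref{prop.SurjectiveOfSigmas} and your treatment of cases (i), (ii) and (iv) match the paper's proof exactly (in case (ii) your appeal to Remark \ref{rem.S0ForBigAndSemiample} and Lemma \ref{lem.BasicPropertiesOfS0} spells out a point the paper leaves implicit, which is fine). However, your case (iii) has a genuine gap: the diagram you describe does not exist. You propose to restrict the iterated trace along $0 \to \sI_Q \to \O_Z \to \O_Q \to 0$, i.e.\ to take $F^{ne}_*(\sI_Q \cdot \sL_{ne,\Delta_Z}(p^{ne}M))$ as the kernel term. For that square to commute you would need
\[
\phi^{ne}_{\Delta_Z}\big(F^{ne}_*(\sI_Q \cdot \sL_{ne,\Delta_Z})\big) \subseteq \sI_Q,
\]
and this fails for a general closed point $Q$: that containment is precisely the compatibility condition defining an $F$-pure center (Definition \ref{def.FPureCenter}), which an arbitrary point of the curve does not satisfy. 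What $p^{-e}$-linearity does give is $\phi^{ne}_{\Delta_Z}\big(F^{ne}_*(\sI_Q^{p^{ne}} \cdot \sL_{ne,\Delta_Z})\big) = \sI_Q \cdot \phi^{ne}_{\Delta_Z}(F^{ne}_*\sL_{ne,\Delta_Z}) \subseteq \sI_Q$, so one must use the Frobenius-thickened sequence $0 \to \O_Z(-p^{ne}Q) \to \O_Z \to \O_{p^{ne}Q} \to 0$ (suitably twisted); this is exactly what the paper does by putting $\O_Z(p^e(M|_Z - Q) + L_Z)$ in the lower-left corner.

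This substitution is not cosmetic, because it inverts the logic of your final paragraph. With the reduced ideal $\sI_Q$, the $H^1$ vanishing you invoke would already follow from $\deg(M|_Z - K_Z - \Delta_Z) > 0$, making the hypothesis of degree $> 1$ pointless; with the correct ideal the relevant line bundle is $\O_Z(p^{ne}(M|_Z - Q) + L_{ne})$, whose degree is $p^{ne}\big(\deg(M|_Z - K_Z - \Delta_Z) - 1\big) + \deg(K_Z + \Delta_Z)$, and it is exactly the bound $> 1$ that makes this tend to infinity and kills $H^1$ via Serre duality. Meanwhile your ``delicate point,'' the surjectivity of the induced map $H^0(Q, \O_{p^{ne}Q}) \to H^0(Q, \O_Q)$, has nothing to do with the degree bound (nor really with smoothness): it holds because $\phi_{\Delta_Z}$ itself is surjective --- sharp $F$-purity of $(Z, \Delta_Z)$ --- and surjective maps remain surjective after passing to quotients and taking sections of skyscrapers. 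So the two hypotheses of case (iii) are used in the opposite places from where you put them; once the thickened ideal is inserted and these roles are straightened out, your diagram chase becomes the paper's argument.
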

\begin{proof}
In each case, we utilize Proposition \ref{prop.SurjectiveOfSigmas}.  Therefore, it is sufficient to show that the linear system associated to $\FCohomology^0\big(Z, \sigma(Z, \phi_Z^{\Delta}) \tensor \O_Z(M)\big)$ is base-point-free.  Note that $\O_Z = \sigma(Z, \phi_Z^{\Delta})$ by the sharp $F$-purity hypothesis, \cite{SchwedeFAdjunction}.

In case (i), we simply apply Theorem \ref{thm.BaseGlobalGenerationForSigmaTau}.  For case (ii), we notice that $(Z, \Delta_Z)$ is strongly $F$-regular by \cite{SchwedeFAdjunction} and so we apply Corollary \ref{cor.FullGlobalGeneration}.
Case (iv) is trivial.  For case (iii), we apply a similar strategy as in Theorem \ref{thm.TauCohomologyForCurves}.  Set $L_Z = (1-p^e)(K_Z + \Delta_Z) \sim_{\bQ} (1-p^e)(K_X + \Delta)|_Z$.  We consider the following diagram for $e \gg 0$ and any closed point $Q \in Z$.
\[
{\small
\xymatrix@C=15pt{
H^0\big(Z, \O_Z(M|_Z - Q)\big) \ar[r] & H^0\big(Z, \O_Z(M|_Z)\big) \ar[r]^-{\alpha} & H^0(Q, \O_Q) \\
H^0\big(Z, \O_Z(p^e (M|_Z - Q) + L_Z) \big)   \ar[u] \ar[r] & H^0\big(Z, \O_Z(p^e M|_Z + L_Z) \big)  \ar[u]^{\phi} \ar[r]_-{\beta} & H^0(Q, \O_{p^e Q}) \ar[u]_{\psi} \ar[r] & 0
}
}
\]
where the bottom row is exact since
\[
\begin{array}{rl}
& H^1\big(Z, \O_Z(p^e (M|_Z - Q) + L_Z) \big) \\
= & H^0\big(Z, \omega_Z(-p^e (M|_Z - Q) - L_Z) \big) \\
= & H^0\big(Z, \O_Z(K_Z - M|_Z + Q - (p^e - 1)(M|_Z - K_Z - \Delta_Z - Q)\big)\\
= & 0
\end{array}
\]
and the last equality follows since $e \gg 0$ and $\deg M|_Z - K_Z - \Delta_Z - Q > 0$.  Therefore $\beta$ is surjective and certainly $\psi$ is as well.  Thus $\alpha$ is surjective and the proof is complete.
\end{proof}
\begin{remark}
 One can apply the method of the previous theorem even without the assumption that the $F$-pure center $Z$ is normal.  Indeed, one only needs that $(Z, \phi_Z)$ is sharply $F$-pure (meaning that $\phi_Z$ is surjective) and that $S^0(Z, \sigma(Z, \phi_Z) \tensor \O_X(M))$ globally generates $\sigma(Z, \phi_Z) \tensor \O_X(M)$.
\end{remark}

\section{Further comments}
\label{sec.Comments}

Indeed, many of the results of this paper can easily be generalized to the context of $F$-pure Cartier modules in the sense of \cite{BlickleBoeckleCartierModulesFiniteness, BlickleTestIdealsViaAlgebras}.  On a scheme $X$, an \emph{$F$-pure Cartier-module} is a coherent $\O_X$-module $\sF$ together with a given surjective map
\[
\phi : F^e_* (\sF \tensor \sL) \to \sF
\]
for some choice of invertible sheaf $\sL$.  We note that the Cartier-modules introduced in \cite{BlickleBoeckleCartierModulesFiniteness} do not include the line bundle $\sL$ (or rather, always assumed $\sL = \O_X$), but the authors were certainly aware of this generalization.

Let us give an example of this.  Suppose that $X$ is any $d$-dimensional $F$-injective variety over a field.  Since $X$ is in particular S1, it follows that $\dim \Supp \myH^i(\omega_X^{\mydot}) \leq d + i - 1$ for $-d < i < 0$, see \cite{BlickleSchwedeTuckerTestAlterations}.  Now, notice that the natural map $F^e_* \myH^i(\omega_X^{\mydot}) \to \myH^i(\omega_X^{\mydot})$ is surjective since $X$ is $F$-injective.  Therefore, using the same argument as in \cite{KeelerFujita} or Theorem \ref{thm.BaseGlobalGenerationForSigmaTau}, we obtain the following:

\begin{proposition}
Suppose that $X$ is a projective $F$-injective $d$-dimensional variety.  Further suppose that $\sM$ is a globally generated ample line bundle and $\sL$ is any other ample line bundle.  Then the sheaf
\[
\myH^{i}(\omega_X^{\mydot}) \tensor \sM^{d + i - 1} \tensor \sL
\]
is globally generated for $-d < i < 0$.
\end{proposition}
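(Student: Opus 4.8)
The plan is to run the same argument as in Theorem~\ref{thm.KeelerThm} and Theorem~\ref{thm.BaseGlobalGenerationForSigmaTau}, but applied to the cohomology sheaf $\sH := \myH^i(\omega_X^{\mydot})$ in place of $\omega_X$, using the dimension bound on $\Supp \sH$ to control which cohomological degrees must vanish. Write $\sN = \sM^{d+i-1} \tensor \sL$, so that the goal is to show $\sH \tensor \sN$ is globally generated. Since $X$ is $F$-injective, the trace map induces a surjection $F^e_* \sH \to \sH$ (as recalled just above); twisting by $\sN$ and using the projection formula $F^e_* \sH \tensor \sN \cong F^e_*\big(\sH \tensor (F^e)^*\sN\big) = F^e_*(\sH \tensor \sN^{p^e})$ produces a surjection $F^e_*(\sH \tensor \sN^{p^e}) \to \sH \tensor \sN$ for every $e$. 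Because global generation passes to quotients, it suffices to exhibit a single $e \gg 0$ for which the source $F^e_*(\sH \tensor \sN^{p^e})$ is globally generated as an $\O_X$-module.

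To obtain this I would apply Castelnuovo--Mumford regularity (Theorem~\ref{thm.Regularity}) with the globally generated ample $\sM$ to the sheaf $\sF = F^e_*(\sH \tensor \sN^{p^e})$. Thus I must verify $H^j(X, \sF \tensor \sM^{-j}) = 0$ for all $j > 0$. Using the projection formula again together with the finiteness of $F^e$ (so that $F^e_*$ is exact and computes the same cohomology), one rewrites
\[
H^j(X, \sF \tensor \sM^{-j}) \cong H^j\big(X, \sH \tensor (\sM^{d+i-1-j}\tensor \sL)^{p^e}\big),
\]
so that everything reduces to the vanishing of these groups.

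The verification splits into two ranges, and this is where the support bound does the real work. For $j > d+i-1$ the group vanishes automatically by Grothendieck vanishing, since $\dim \Supp \sH \leq d+i-1$ as recorded above; crucially, this removes exactly the degrees in which the exponent $d+i-1-j$ of $\sM$ would be negative. For the finitely many remaining $j$ with $1 \leq j \leq d+i-1$, the exponent $d+i-1-j$ is $\geq 0$, so $\sM^{d+i-1-j}\tensor \sL$ is ample, whence $(\sM^{d+i-1-j}\tensor \sL)^{p^e}$ grows arbitrarily positive as $e$ increases. Serre vanishing then kills $H^j\big(X, \sH \tensor (\sM^{d+i-1-j}\tensor\sL)^{p^e}\big)$ once $e$ is large, and taking the maximum of the resulting bounds over the finitely many such $j$ yields a single $e$ valid for all of them simultaneously. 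With this $e$, Theorem~\ref{thm.Regularity} applies and $\sF$ is globally generated, hence so is its quotient $\sH \tensor \sN$. The only genuinely delicate point is the bookkeeping around the support bound: one must use it precisely to discard the high-degree terms where $\sM$ would appear with a negative exponent, and to confirm that the power $d+i-1$ (rather than $d$) is exactly what makes the nonnegative-exponent range coincide with the range left over after Grothendieck vanishing.
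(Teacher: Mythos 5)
Your proof is correct and follows exactly the paper's route: use $F$-injectivity to get the surjection $F^e_*\big(\myH^i(\omega_X^{\mydot}) \tensor \sN^{p^e}\big) \to \myH^i(\omega_X^{\mydot}) \tensor \sN$, then show the source is $0$-regular with respect to $\sM$ and invoke Theorem \ref{thm.Regularity}. The paper states the $0$-regularity without justification ("we know that $\sF$ satisfies the hypothesis"), and your two-range verification --- Grothendieck vanishing via the support bound $\dim \Supp \myH^i(\omega_X^{\mydot}) \leq d+i-1$ for the degrees where $\sM$ would carry a negative exponent, Serre vanishing with $e \gg 0$ for the rest --- is precisely the intended argument.
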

\begin{proof}
Choosing some $e \gg 0$, we know that $\sF := F^e_* \big( \myH^i(\omega_X^{\mydot}) \tensor \sM^{p^e(d + i - 1)} \tensor \sL^{p^e} \big)$ satisfies the hypothesis of Theorem \ref{thm.Regularity} (in other words, it is $0$-regular with respect to $\sM$).  Thus $\sF$ is globally generated as an $\O_X$-module.  Thus the quotient $\myH^i(\omega_X^{\mydot}) \tensor \sM^{d + i - 1} \tensor \sL$ is also globally generated, proving the theorem.
\end{proof}



\def\cprime{$'$} \def\cprime{$'$}
  \def\cfudot#1{\ifmmode\setbox7\hbox{$\accent"5E#1$}\else
  \setbox7\hbox{\accent"5E#1}\penalty 10000\relax\fi\raise 1\ht7
  \hbox{\raise.1ex\hbox to 1\wd7{\hss.\hss}}\penalty 10000 \hskip-1\wd7\penalty
  10000\box7}
\providecommand{\bysame}{\leavevmode\hbox to3em{\hrulefill}\thinspace}
\providecommand{\MR}{\relax\ifhmode\unskip\space\fi MR}
\providecommand{\MRhref}[2]{%
  \href{http://www.ams.org/mathscinet-getitem?mr=#1}{#2}
}
\providecommand{\href}[2]{#2}

\end{document}